\begin{document}

 \newtheorem{thm}{Theorem}[section]
 \newtheorem{coro}[thm]{Corollary}
 \newtheorem{lemma}[thm]{Lemma}{\rm}
 \newtheorem{proposition}[thm]{Proposition}

 \newtheorem{defn}[thm]{Definition}{\rm}
 \newtheorem{ass}[thm]{Assumption}
 \newtheorem{remark}[thm]{Remark}
 \newtheorem{ex}{Example}
\numberwithin{equation}{section}
\newcommand{\bbR}{\mathbb{R}}
\newcommand{\bbC}{\mathbb{C}}
\newcommand{\bbZ}{\mathbb{Z}}

\def\la{\langle}
\def\ra{\rangle}
\def\fac{{\rm !}}

\def\x{\mathbf{x}}
\def\z{\mathbf{x}}
\def\p{\mathbf{p}}
\def\P{\mathbb{P}}
\def\S{\mathbf{S}}
\def\h{\mathbf{h}}
\def\m{\mathbf{m}}
\def\y{\mathbf{y}}
\def\bz{\mathbf{z}}
\def\F{\mathcal{F}}
\def\R{\mathbb{R}}
\def\T{\mathbf{T}}
\def\N{\mathbb{N}}
\def\D{\mathbf{D}}
\def\V{\mathbf{V}}
\def\U{\mathbf{U}}
\def\K{\mathbf{K}}
\def\Q{\mathbf{Q}}
\def\W{\mathbf{W}}
\def\M{\mathbf{M}}
\def\oM{\overline{\mathbf{M}}}

\def\C{\mathbb{C}}
\def\P{\mathbb{P}}
\def\Z{\mathbb{Z}}
\def\bZ{\mathbf{Z}}
\def\H{\mathcal{H}}
\def\A{\mathbf{A}}
\def\V{\mathbf{V}}
\def\B{\mathbf{B}}
\def\c{\mathbf{C}}
\def\L{\mathcal{L}}
\def\bS{\mathbf{S}}
\def\H{\mathcal{H}}
\def\I{\mathbf{I}}
\def\Y{\mathbf{Y}}
\def\X{\mathbf{X}}
\def\G{\mathbf{G}}
\def\f{\mathbf{f}}
\def\z{\mathbf{z}}
\def\bv{\mathbf{v}}
\def\y{\mathbf{y}}
\def\d{\hat{d}}
\def\x{\mathbf{x}}
\def\bI{\mathbf{I}}

\def\g{\mathbf{g}}
\def\w{\mathbf{w}}
\def\b{\mathbf{b}}
\def\a{\mathbf{a}}
\def\u{\mathbf{u}}
\def\v{\mathbf{v}}
\def\q{\mathbf{q}}
\def\e{\mathbf{e}}
\def\s{\mathcal{S}}
\def\cc{\mathcal{C}}

\def\tg{\tilde{g}}
\def\tx{\tilde{\x}}
\def\tg{\tilde{g}}
\def\tA{\tilde{\A}}

\def\cX{\overline{\mathbf{X}}}
\def\bell{\boldsymbol{\ell}}
\def\bxi{\boldsymbol{\xi}}
\def\balpha{\boldsymbol{\alpha}}
\def\bbeta{\boldsymbol{\beta}}
\def\bgamma{\boldsymbol{\gamma}}
\def\eeta{\boldsymbol{\eta}}
\def\bpsi{\boldsymbol{\psi}}
\def\supmu{{\rm supp}\,\mu}
\def\supp{{\rm supp}\,}
\def\cd{\mathcal{C}_d}
\def\cok{\mathcal{C}_{\K}}
\def\vol{{\rm vol}\,}
\def\om{\mathbf{\Omega}}
\def\blambda{\boldsymbol{\lambda}}
\def\btheta{\boldsymbol{\theta}}
\def\bphi{\boldsymbol{\phi}}
\def\bpsi{\boldsymbol{\psi}}
\def\bnu{\boldsymbol{\nu}}
\def\bmu{\boldsymbol{\mu}}
\def\bom{\boldsymbol{\Omega}}
\def\tM{\hat{\M}}
\def\tv{\hat{\v}}

\title[Pell's equation, SOS and equilibrium measure]{Pell's equation, sum-of-squares 
and equilibrium measures of a compact set}
\thanks{J.B. Lasserre is supported by the AI Interdisciplinary Institute ANITI  funding through the french program
``Investing for the Future PI3A" under the grant agreement number ANR-19-PI3A-0004. This research is also part of the programme DesCartes and is supported by the National Research Foundation, Prime Minister's Office, Singapore under its Campus for Research Excellence and Technological Enterprise (CREATE) programme.}

\author{Jean B. Lasserre}
\address{LAAS-CNRS and Institute of Mathematics\\
University of Toulouse\\
LAAS, 7 avenue du Colonel Roche\\
31077 Toulouse C\'edex 4, France\\
Tel: +33561336415}
\email{lasserre@laas.fr}

\date{}

\begin{abstract}
We first interpret Pell's equation satisfied by Chebyshev polynomials for each degree $t$,
 as a certain Positivstellensatz, which then yields for each integer $t$,
 what we call a generalized Pell's equation,  satisfied by
 reciprocals of Christoffel functions of ``degree" $2t$, associated with
the equilibrium measure $\mu$ of the interval $[-1,1]$ and the measure $(1-x^2)d\mu$. 
 We next extend this point of view to arbitrary compact basic semi-algebraic set 
 $S\subset\R^n$ and obtain a generalized Pell's equation (by analogy with the interval $[-1,1]$).
 Under some conditions, for each $t$ the equation is satisfied by reciprocals of Christoffel functions of ``degree" $2t$ associated with (i) the  equilibrium measure $\mu$ of $S$ and (ii), measures $gd\mu$ for an appropriate set of generators $g$ of $S$. These equations depend on the particular choice of generators that define the set $S$. In addition to the interval $[-1,1]$, 
 we show that for $t=1,2,3$, the equations are indeed also satisfied for
 the equilibrium measures of the $2D$-simplex, the $2D$-Euclidean unit ball and unit box. Interestingly, 
 this view point connects Pell's equation, orthogonal polynomials, Christoffel functions and equilibrium measures on one side, with sum-of-squares, convex optimization and certificates of positivity in real algebraic geometry on another side.
 \end{abstract}

\maketitle

\section{Introduction}
One goal of this paper is to introduce what we call a \emph{generalized Pell's equation} which, under certains conditions, is satisfied by reciprocals of Christoffel functions associated with (i) the equilibrium measure $\lambda_S$ of a compact basic semi-algebraic set $S\subset\R^n$,
and (ii) associated measures $gd\lambda_S$, $g\in G$, 
for an appropriate set $G$ of generators of $S$. Moreover, checking whether
a chosen set $G$ of generators is appropriate, can be done by solving a sequence of convex optimization problems.

Another goal is to reveal via the path to obtain the result, strong links between orthogonal polynomials, Christoffel functions and equilibrium measures on one side, and certificates of positivity in real algebraic geometry, 
optimization and sum-of-squares, as well as a duality result on convex cones by Nesterov, 
on the other side.  

\subsection{Initial and motivating example}
The starting point is Pell's equation satisfied by
Chebyshev polynomials. Pell's equation\footnote{A multivariate polynomial $F\in\Z[\x]$ is called a multi-variable Fermat-Pell polynomial if there exist polynomials $C,H\in\Z[\x]$ such that $C^2-F\,H^2=1$ or $C^2-F\,H^2=-1$ 
for all $\x$. Then the triple $(C,H,F)$ is a multi-variable solution to Pell's equation; see \cite{pell-2}.}
is a topic in algebraic number theory and for more details (not needed here) the interested reader is referred to e.g. \cite{pell-2,pell-1}.
When looking at this equation with special glasses,
we can interpret this equation as a  Putinar's certificate of positivity on the interval $[-1,1]$, for the constant polynomial equal to $1$.
Then with $g(x)=1-x^2$, the reciprocal of the 
two Christoffel functions 
$\Lambda^\mu_t$ and $\Lambda^{g\cdot\mu}_t$ respectively associated with the equilibrium measure
$d\mu=dx/\pi\sqrt{1-x^2}$ and the measure $g\cdot\mu:=gd\mu$, satisfy the same equation, for every $t$. 
Equivalently, for every integer $t$, the two polynomials
$1/(2t+1)\Lambda^\mu_t$ and $(1-x^2)/(2t+1)\Lambda^{g\cdot\mu}_t$ form a 
\emph{partition of unity} of $[-1,1]$. 

More precisely: In $\R[x]$, let $(T_n)_{n\in\N}\subset\R[x]$ (resp. $(U_n)_{n\in\N}\subset\R[x]$)
be the Chebyshev polynomials of the first kind (resp. of the second kind).
Then 
\begin{equation}
\label{pell}
T_n(x)^2+(1-x^2)\,U_{n-1}(x)^2\,=\,1\,,\quad n=1,\ldots\,.
\end{equation}
In other words, for every integer $n\geq1$, the triple $(T_n,(x^2-1),U_{n-1})$ is a solution to 
the polynomial Pell's equation \cite{pell-2}.

Next, let $d\mu(x)=dx/\pi\sqrt{1-x^2}$, $x\mapsto g(x)=1-x^2$, and denote by $g\cdot\mu$ the measure 
$g\,d\mu=\sqrt{1-x^2}dx/\pi$. The family $(\hat{T}_n)_{n\in\N}$ (resp. $(\hat{U}_n)_{n\in\N}$) with
\[\hat{T}_0=T_0\,;\: \hat{T}_n=\sqrt{2}\,T_n\,,\quad n\geq1\,;\quad \hat{U}_n=\sqrt{2}\,U_n\,,\quad\forall n\in\N\,,\]
is orthonormal w.r.t. $\mu$ (resp. $g\cdot\mu$).
Then \eqref{pell} yields
\begin{equation}
\label{cheby-orthonormal}
\hat{T}_n(x)^2+(1-x^2)\,\hat{U}_{n-1}(x)^2\,=\,2\,,\quad n=1,\ldots\,\end{equation}
and consequently, summing up yields
\begin{equation}
\label{pell-sum-0}
\underbrace{\hat{T}_0^2+\sum_{n=1}^t\hat{T}_n(x)^2}_{\Lambda^\mu_t(x)^{-1}}+(1-x^2)\,
\underbrace{\sum_{n=0}^{t-1}\hat{U}_{n}(x)^2}_{\Lambda^{g\cdot\mu}_{t-1}(x)^{-1}}\,=\,2t+1\,,\quad t=0,1,\ldots
\end{equation}
That is:
\begin{equation}
\label{pell-sum-1}
\Lambda^{\mu}_t(x)^{-1}+(1-x^2)\,\Lambda^{g\cdot\mu}_{t-1}(x)^{-1}\,=\,2t+1\,,\quad t=0,1,\ldots,
\end{equation}
where $\Lambda^\mu_t$ (resp. $\Lambda^{g\cdot\mu}_t$)
 is the Christoffel function of ``degree" $2t$ associated with $\mu$ (resp. $g\cdot\mu$). 

So for each integer $t$, the triple $((\Lambda^\mu_t)^{-1},(x^2-1),(\Lambda^{g\cdot\mu}_{t-1})^{-1})$ satisfies 
what we call a \emph{generalized} polynomial Pell's equation; indeed (i)
$(\Lambda^\mu_t)^{-1}\in\Z[x]$ is a \emph{sum} of $t+1$ squares (in short, an SOS) and not a \emph{single} square, and (ii) after scaling, $(\Lambda^\mu)^{-1}_t/(2t+1)\not\in\Z[x]$
(and similarly 
for $(\Lambda^{g\cdot\mu}_{t-1})^{-1})$. Also observe that the scaled polynomials
$\frac{1}{2t+1}(\Lambda^\mu_t)^{-1}$ and $\frac{1}{2t+1}g\cdot(\Lambda^{g\cdot\mu}_{t-1})^{-1}$ form a partition 
of unity for the interval $[-1,1]$.

The measure $\mu$ is called the \emph{equilibrium measure} associated with the interval $[-1,1]$. 
Next, it turns out that \eqref{pell-sum-1} is in fact a particular case of \cite[Theorem 17.7]{nesterov}
which, rephrased later in the polynomial context by the author in \cite[Lemma 4]{cras-1}, 
states that every polynomial $p\in\R[\x]$ (here the constant polynomial $p=2t+1$) 
in the interior of a certain convex cone, has a distinguished representation in terms of certain 
SOS. Namely, such SOS are reciprocals of Christoffel functions associated with some rather 
``intriguing" linear functional
$\phi_p\in\R[\x]^*$ associated with $p$ (see \cite[Equation (10)]{cras-1}). However  in \cite[Lemma 4]{cras-1} we did not provide any clue 
on what is the link between $p$ and  $\phi_p$.
So when $S=[-1,1]$,
\eqref{pell-sum-1} tells us that this intriguing linear functional $\phi_p$ associated with 
\emph{constant} polynomials $p$, is in fact proportional to the
(Chebyshev) equilibrium measure $dx/\pi\sqrt{1-x^2}$ of the interval $[-1,1]$.

So the message of this introductory example is that we can view the polynomial Pell's equation \eqref{pell} as well as 
its generalization \eqref{pell-sum-1}, as 
algebraic Putinar certificates of increasing degree $t=1,2,\ldots$,
that the constant polynomials ($p=1$ for \eqref{pell} and $p=2t+1$
for \eqref{pell-sum-1}) are positive on the interval $[-1,1]$. 

\subsection{Contribution}

The goal of this paper is (i) to define 
a framework that extends the above point  of view to the broader context of compact basic semi-algebraic sets,
(ii) to provide conditions under which a multivariate analogue of \eqref{pell-sum-1} holds, and (iii)
to show that indeed \eqref{pell-sum-1} holds for $t=1,2,3$ for the $2D$-Euclidean ball, the $2D$-unit box, and the $2D$-simplex.
As we next see, Equation \eqref{pell-sum-1} is particularly interesting
as it links statistics, orthogonal polynomials and equilibrium measures on one side, with convex optimization and duality, sum-of-squares and algebraic certificates of positivity, on another side.

More precisely, with $g_j\in\R[\x]$, $j=1,\ldots m$,  let
\begin{equation}
\label{set-S}
 S\,:=\{\,\x\in\R^n: g_j(\x)\geq0\,,\:j=1,\ldots,m\,\}\,,
\end{equation}
be compact with nonempty interior. 
Our contribution is to investigate an appropriate multivariate analogue for $S$ in \eqref{set-S}
and its equilibrium measure,
of the SOS characterization \eqref{pell-sum-1} for the Chebyshev measure $dx/\pi\sqrt{1-x^2}$ on $[-1,1]$. 
Given $g\in\R[\x]$, let
$t_g:=\lceil\mathrm{deg}(g)/2\rceil$, and let $s(t):={n+t\choose n}$. With $g_0=1$, introduce
$G:=\{g_0,g_1,\ldots,g_m\}$
and for every $t\in\N$, let $G_t:=\{g\in G:t_g\leq t\}$ (when $g\in\R[\x]_2$ for all $g\in G$
then $G_t=G$ for all $t\geq1$). For two polynomials $g,h\in\R[\x]$, 
we sometimes use the notation $g\cdot h$ for their usual product, when needed to avoid ambiguity. 
Given a Borel measure $\phi$ on $S$, denote by $g\cdot\phi$,
$g\in G$, the measure $gd\phi$ on $S$. 
Then define the sets
\begin{eqnarray}
\label{quad-module}
Q(G)&:=&
\{\,\sum_{g\in G}\sigma_g\,g\,;\:\sigma_g\in\Sigma[\x]\,\}\\
\label{quad-module-t}
Q_t(G)&:=&
\{\,\sum_{g\in G} \sigma_g\,g\,;\quad\sigma_g\in\Sigma[\x]\,; \quad\mathrm{deg}(\sigma_g\,g)\leq 2t\,\}\,,\quad t\in\N\,,
\end{eqnarray}
respectively called the quadratic module and $2t$-truncated quadratic module
associated with $\{g_1,\ldots,g_m\}\subset\R[\x]$. ($\Sigma[\x]\subset\R[\x]$ is convex cone of sum-of-squares polynomials (SOS in short).)

(i) We first show that if a Borel probability measure $\phi$
on $S$ (with well-defined Christoffel functions $\Lambda^{g\cdot\phi}_t$, $g\in G$, $t\in\N$)
satisfies
\begin{equation}
 \label{contrib-1}
\frac{1}{\sum_{g\in G_t}s(t-t_g)}\,\sum_{g\in G_t} g\cdot (\Lambda^{g\cdot\phi}_{t-t_g})^{-1}\,=\,1\,,\quad \forall\,t\geq t_0\,,
\end{equation}
for some $t_0\in\N$, and $(S,g\cdot\phi)$ satisfies the Bernstein-Markov property (see \eqref{bernstein-markov} below) for every $g\in G$, then necessarily $\phi$ is the equilibrium measure $\lambda_S$ of $S$ (as defined in e.g. \cite{bedford}). Notice that
\eqref{contrib-1} is the perfect multivariate analogue of the univariate 
\eqref{pell-sum-1} for $S=[-1,1]$ and its equilibrium measure $\phi=dx/\pi\sqrt{1-x^2}$; therefore we propose to name \eqref{contrib-1} 
a \emph{generalized Pell's equation} as it is the analogue of \eqref{pell-sum-1} 
for several polynomials $g$, and the solutions 
$(1/\Lambda^{g\cdot\phi}_t)_{g\in G}$ are sums-of-squares (and not a single square as in the multivariate Pell's equation \cite{pell-2}.)
So in this case, for every $t\geq t_0$, as an element of $\mathrm{int}(Q_t(G)^*)$, the vector of degree-$2t$ moments of the equilibrium measure $\lambda_S$, is strongly related to the constant polynomial 
``$1$" in $\mathrm{int}(Q_t(G))$ (which can be viewed as the density of $\lambda_S$ w.r.t. $\lambda_S$). 
Such a situation is  likely to hold only for specify cases  of sets $S$ (with $S=[-1,1]$ and $\lambda_S=dx/\pi\sqrt{(1-x^2)}$ being the prototype example).

However we also show that in the general case, the vector of degree-$2t$ moments of the equilibrium measure $\lambda_S$, 
is still related to the constant polynomial ``$1$" but in a weaker fashion. Namely, let $\mu_t=p^*_t\lambda_S$
be the probability measure whose density $p^*_t$ w.r.t. $\lambda_S$ is the polynomial in the left-hand-side of 
\eqref{contrib-1} (with $\phi=\lambda_S$). Then $\lim_{t\to\infty}\mu_t=\lambda_S$ for the weak convergence of probability measures. 
That is, asymptotically as $t$ grows, and as a density w.r.t. $\lambda_S$, $p^*_t$ behaves like the constant density ``$1$" when integrating continuous functions against $p^*_t\lambda_S$.

(ii) We next provide an \emph{if and only if} condition on $S$ and its representation \eqref{set-S}
so that indeed, for every $t\geq t_0$, there exists a distinguished linear functional
$\phi^*_{2t}\in\R[\x]^*_{2t}$, positive on $Q_t(G)$, which satisfies 
\begin{equation}
 \label{contrib-2}
 1\,=\,\frac{1}{\sum_{g\in G_t}s(t-t_g)}\, \sum_{g\in G_t} g\cdot (\Lambda^{g\cdot\phi^*_{2t}}_t)^{-1}\,,
 \end{equation}
an analogue of \eqref{contrib-1} with Christoffel functions 
$\Lambda^{g\cdot\phi^*_{2t}}_t$ associated with $\phi^*_{2t}$ and $g\in G_t$. 
Interestingly, this condition which states that 
\begin{equation}
\label{contrib-int}
1\,\in\,\mathrm{int}(Q_t(G))\,,\quad \forall t\in\N\,,\end{equation}
is a question of real algebraic geometry related to a (degree-$t$ truncated) 
quadratic module associated with a set $G$ of generators of $S$. Among all possible sets of generators 
for a given compact semi-algebraic set $S$, those $G$ for which \eqref{contrib-int} holds, deserve to be distinguished. 

 (iii) Next, if condition \eqref{contrib-int}
is satisfied then for every fixed $t$, the moment vector
$\bphi^*_{2t}$ associated with the linear functional $\phi^*_{2t}$ in (ii), is the \emph{unique} optimal solution of a convex optimization problem (with a ``$\log\mathrm{det}$" criterion) which can be solved efficiently via off-the-shelf softwares like e.g. 
CVX \cite{cvx} or Julia \cite{julia}. 
In fact, \eqref{contrib-2} is an algebraic ``certificate" that condition 
\eqref{contrib-int} 
holds, and even more, \eqref{contrib-2} and \eqref{contrib-int} are equivalent. Of course, the larger $t$ is, the larger is the size of the resulting convex optimization problem to solve. 

Moreover, every (infinite sequence) accumulation point $\bphi^*=(\phi^*_{\balpha})_{\balpha\in\N^n}$ 
 of the sequence of finite moment-vectors $(\bphi^*_{2t})_{t\in\N}$ 
 associated with the linear functional $\phi^*_{2t}$,
 is represented by a Borel measure $\phi$ on $S$. Then  $\phi$ satisfies \eqref{contrib-1} 
 if and only if  the whole sequence $(\bphi^*_{2t})_{t\in\N}$ converges to
 $\bphi^*$  \emph{and} the convergence is \emph{finite}.
  That is, there exists $t_0\in\N$ such that
 for every $t\geq t_0$, $\phi$ is a representing measure of $\bphi^*_{2t}$. Equivalently,
 for every $t\geq t_0$, $\bphi^*_{2(t+1)}$ is an \emph{extension} of $\bphi^*_{2t}$.
In addition, if the measure $\phi$ is such that $(S,g\cdot\phi)$
 satisfies the Bernstein-Markov property for all $g\in G$, then necessarily $\phi$ is the equilibrium 
 measure $\lambda_S$ of $S$ (by (i)).

Interestingly, this hierarchy of convex optimization problems
provides a practical numerical scheme (at least for moderate values of $t$) 
to check whether the (unique) optimal solution $\bphi^*_{2(t+1)}$ is an extension of $\bphi^*_{2t}$, 
for an arbitrary fixed $t\in\N$, which should eventually happen if \eqref{contrib-1} has ever to hold for the limit measure $\phi$ associated with the sequence $(\bphi^*_{2t})_{t\in\N}$. 

If $\bphi^*_{2(t+1)}$ is an extension of $\bphi^*_{2t}$ for some $t$, then it is a good indication that indeed \eqref{contrib-1} may hold with $\bphi^*_{2t}$ being moments of $\phi$ (up to degree $2t$). 
On the other hand, if $\bphi^*_{2(t+1)}$ is not an extension of $\bphi^*_{2t}$ then it may be because
(i) there is no limit measure $\phi$ that satisfies \eqref{contrib-1}, or (ii) one must wait for a larger $t$ ($t\geq t_0$)
to see a possible ``extension", or (iii) perhaps $G$ is not an appropriate set of generators of $S$. However, in that case it remains to check whether the limit measure $\phi$ is still the equilibrium measure of $S$, and if not, to detect its distinguishing features.

(iv) Finally, in support that \eqref{contrib-1} may be valid 
for sets $S$ other than $[-1,1]$, we also show that 
for $t=1,2,3$, \eqref{contrib-1} holds when
$S$ is the $2D$-Euclidean unit ball and unit box, as well as the $2D$-simplex, in which case
$\lambda_S$ is proportional to
$dxdy/\sqrt{1-x^2-y^2}$, $dxdy/\sqrt{(1-x^2)(1-y^2)}$,
and $dxdy/\sqrt{x\,y\,(1-x-y)}$, respectively.

\section{Main result}

\subsection{Notation and definitions}
Let $\R[\x]$ denote the ring of real polynomials in the variables $\x=(x_1,\ldots,x_n)$ and $\R[\x]_t\subset\R[\x]$ be its subset 
of polynomials of total degree at most $t$. 
Let $\N^n_t:=\{\balpha\in\N^n:\vert\balpha\vert\leq t\}$
(where $\vert\balpha\vert=\sum_i\alpha_i$) with cardinal 
$s(t)={n+t\choose n}$. Let $\v_t(\x)=(\x^{\balpha})_{\balpha\in\N^n_t}$ 
be the vector of monomials up to degree $t$, 
and let $\Sigma[\x]_t\subset\R[\x]_{2t}$ be the convex cone of polynomials of total degree at most $2t$ which are sum-of-squares (in short SOS).

For a real symmetric matrix 
$\A=\A^T$ the notation $\A\succeq0$ (resp. $\A\succ0$) stands for $\A$ is positive semidefinite (p.s.d.) (resp. positive definite (p.d.)).
The support of a Borel measure $\mu$ on $\R^n$ is the smallest closed set $A$ such that
$\mu(\R^n\setminus A)=0$, and such a  set $A$ is unique. Denote by $\mathscr{C}(S)$ 
the space of real continuous functions on $S$.
%
%
\paragraph{Riesz functional, moment and localizing matrix.} 
With a real sequence $\bphi=(\phi_{\balpha})_{\balpha\in\N^n}$  
(in bold) is associated the \emph{Riesz} linear functional $\phi\in \R[\x]^*$  (not in bold) defined by
\[p\:(=\sum_{\balpha}p_{\balpha}\x^{\balpha})\quad \mapsto \phi(p)\,=\,\langle\bphi,p\rangle\,=\,\sum_{\balpha}p_{\balpha}\,\phi_{\balpha}\,,\quad\forall p\in\R[\x]\,,\]
and the moment matrix $\M_t(\bphi)$
with rows and columns indexed by $\N^n_t$ (hence of size $s(t):={n+t\choose t}$), and with entries
\[\M_t(\bphi)(\balpha,\bbeta)\,:=\,\phi(\x^{\balpha+\bbeta})\,=\,\phi_{\balpha+\bbeta}\,,\quad\balpha,\bbeta\in\N^n_t\,.\]
Similarly given $g\in\R[\x]$ ( $\x\mapsto \sum_{\bgamma} g_{\bgamma}\x^{\bgamma}$),
define the new sequence 
\[g\cdot\bphi\,:=\,(\sum_{\bgamma} g_{\bgamma}\,\phi_{\balpha+\bgamma})_{\balpha\in\N^n}\,,\]
and the localizing matrix associated with $\bphi$ and  $g$,
\[\M_t(g\cdot\bphi)(\balpha,\bbeta)\,:=\,\sum_{\bgamma}g_{\bgamma}\,\phi_{\balpha+\bbeta+\bgamma}\,,
\quad\balpha,\bbeta\in\N^n_t\,.\]
Equivalently, $\M_t(g\cdot\bphi)$ is the moment matrix associated with the new sequence 
$g\cdot\bphi$. The Riesz linear functional $g\cdot\phi$ associated with
the sequence $g\cdot\bphi$ satisfies
\[g\cdot\phi(p)\,=\,\phi(g\cdot p)\,,\quad\forall p\in\R[\x]\,.\]
In particular, for any real symmetric $s(t)\times s(t)$ matrix $\Q$
\begin{equation}
\label{aux}
\phi(g(\x)\,\v_{t}(\x)^T\Q\v_t(\x))\,=\,g\cdot\phi(\v_{t}(\x)^T\Q\v_t(\x))
\,=\,\langle \Q,\M_{t}(g\cdot\bphi)\rangle\,.
\end{equation}
A  real sequence $\bphi=(\phi_{\balpha})_{\balpha\in\N^n}$ has a representing mesure 
if its associated linear functional $\phi$ is a Borel measure on $\R^n$. In this case 
$\M_t(\bphi)\succeq0$ for all $t$; the converse is not true in  general.
In addition, if $\phi$ is supported on  the set $\{\,\x\in\R^n: g(\x)\geq0\,\}$ then necessarily
$\M_t(g\cdot\bphi)\succeq0$ for all $t$.

\paragraph{Christoffel function.} Let 
$\phi\in\R[\x]^*$ be a Riesz functional (not necessarily with a representing measure)
such that $\M_t(\bphi)\succ0$. 
As for Borel measures, we may also define
the (degree-$t$) Christoffel function 
\[\x\mapsto \Lambda^{\phi}_t(\x)^{-1}\,:=\,\v_t(\x)^T\M_t(\bphi)^{-1}\v_t(\x)\,,\quad\forall \x\in\R^n\,,\]
associated with $\phi$. Alternatively, if $(P_{\balpha})_{\balpha\in\N^n}\subset\R[\x]$ is a family of polynomials which are orthonormal with respect to $\phi$, then
\begin{equation}
 \label{ortho-poly-0}
\Lambda^{\phi}_t(\x)^{-1}\,=\,\sum_{\balpha\in \N^n_t}P_{\balpha}(\x)^2\,,\quad\forall\x\in\R^n\,.
\end{equation}
Similarly, if $\M_t(g\cdot\bphi)\succ0$,  we may also define the 
(degree-$t$) Christoffel function
\[\x\mapsto \Lambda^{g\cdot\phi}_t(\x)^{-1}\,:=\,\v_t(\x)^T\M_t(g\cdot\bphi)^{-1}\v_t(\x)\,,\quad\forall \x\in\R^n\,,\]
associated with the Riesz functional $g\cdot \phi$. 

All the above definitions also hold for finite sequences 
$\bphi_{2t}=(\phi_{\balpha})_{\balpha\in\N^n_{2t}}$
and associated Riesz linear functional $\phi_{2t}\in\R[\x]^*_{2t}$. Indeed 
when $t$ is fixed, $\Lambda^{\phi}_t$  and $\M_t(\bphi)$ only depend on the degree $2t$-truncation $\bphi_{2t}$ 
of the infinite sequence $\bphi$. Then the notation $\M_t(\bphi)$ or $\M_t(\bphi_{2t})$ 
(and similarly $\Lambda^{\phi_{2t}}_t$ or $\Lambda^{\phi}_t$)
can be used interchangeably.
Finally, a sequence $\bphi_{2(t+1)}$ is an \emph{extension} of $\bphi_{2t}$ if 
$(\phi_{2(t+1)})_{\balpha}=(\phi_{2t})_{\balpha}$ for all $\balpha\in\N^n_{2t}$, i.e., if
$\bphi_{2t}$ is the restriction of $\bphi_{2(t+1)}$ to all moments up to degree $2t$.

\paragraph{Bernstein-Markov property} A Borel measure $\mu$ supported on a compact set $S\subset\R^n$ satisfies the Bernstein-Markov property 
if there exists a sequence of positive numbers $(M_t)_{t\in\N}$ such that for all 
$t$ and $p\in\R[\x]_t$, 
\begin{equation}
\label{bernstein-markov}
\sup_{\x\in S}\vert p(\x)\vert\,\leq\, M_t\cdot \left(\int_S p^2\,d\mu\right)^{1/2}\,,
\quad\mbox{and}\quad\lim_{t\to\infty}\log(M_t)/t\,=\,0\,\end{equation}
(see e.g. \cite[Section 4.3.3]{book}). The Bernstein-Markov property allows 
qualitative description for asymptotics of the Christoffel function as $t$ grows. When it holds it permits to
establish a strong link between the Christoffel function and Siciak's extremal function
\[\x\mapsto V_S(\x)\,:=\,\sup\left\{\frac{\log\vert p(\x)\vert}{\mathrm{deg}(p)}:\:\Vert p\Vert_S\leq 1\,,\: \mathrm{deg}(p)>0\right\}\,,\quad\x\in\R^n\,.\]
A compact set $S$ is said to be \emph{regular} if its associated Siciak's function is continuous everywhere in 
$\R^n$ (the same definition also extends to $\C^n$; see \cite[Definition 4.4.2, p. 53]{book}). If $S$ is regular and $(S,\mu)$
satisfies the Bernstein-Markov property, then 
\[\mbox{uniformly on compact subsets of $\R^n$:}\quad\lim_{t\to\infty}\frac{1}{2t}\,\log{\Lambda^\mu_t(\x)}\,=\,-V_S(\x)\,.\]

\paragraph{Equilibrium measure.}
The notion of equilibrium measure associated to a given set,
originates from logarithmic potential theory (working in
$\mathbb{C}$ in the univariate case) to minimize some energy functional.
For instance, the equilibrium (Chebsyshev) measure $d\phi:=dx/\pi\sqrt{1-x^2}$ 
minimizes the Riesz $s$-energy functional 
\[\int \int \frac{1}{\vert x-y\vert^s}\,d\mu(x)\,d\mu(y)\,\]
with $s=2$, among all measures $\mu$ equivalent to $\phi$.
Some generalizations have been obtained in the multivariate case 
via pluripotential theory in $\mathbb{C}^n$. 
In particular if $S\subset\R^n\subset\mathbb{C}^n$ is compact then 
the equilibrium measure (let us denote it by $\lambda_S$) is equivalent to
Lebesgue measure on compact subsets of $\mathrm{int}(S)$.
It has an even explicit expression if
$S$ is convex and symmetric about the origin; see e.g. 
Bedford and Taylor \cite[Theorem 1.1]{bedford} and \cite[Theorem 1.2]{bedford}. 
Moreover if $\mu$ is a Borel measure on $S$ and $(S,\mu)$ has the Bernstein-Markov property
\eqref{bernstein-markov}
then the sequence of measures $d\nu_t=\frac{d\mu(\x)}{s(t)\Lambda^\mu_t(\x)}$, 
$t\in\N$, converges to $\lambda_S$
for the weak-$\star$ topology and therefore in particular:
\begin{equation}
 \label{weak-star}
 \lim_{t\to\infty}\int_S\x^{\balpha}\,d\nu_t\,=\,\lim_{t\to\infty}\int_S \frac{\x^{\balpha}\,d\mu(\x)}{s(t)\Lambda^\mu_t(\x)}\,=\,
 \int_S\x^{\balpha}\,d\lambda_S\,,\quad\forall \balpha\in\N^n\,
\end{equation}
(see e.g. \cite[Theorem 4.4.4]{book}). In addition, if a compact $S\subset\R^n$ is regular
then $(S,\lambda_S)$ has the Bernstein-Markov property; see \cite[p. 59]{book}. For a brief account
on equilibrium mesures see the discussion in \cite[Section 4-5, pp. 56--60]{book} while for more detailed
expositions see some of the references indicated there.

\subsection{Brief summary of main results}
In Section \ref{preliminary}, Theorem \ref{th0} shows that if a linear functional
$\phi\in\R[\x]^*$ satisfies the multivariate analogue \eqref{contrib-1} of \eqref{pell-sum-1}
for $S$ in \eqref{set-S}, then under a certain technical assumption,
$\phi$ is necessarily the equilibrium measure $\lambda_S$ of $S$. Corollary \ref{ortho-poly}
shows that \eqref{contrib-1} is also a strong property of orthonormal polynomials associated with $\lambda_S$,
the perfect analogue of \eqref{cheby-orthonormal} for Chebyshev polynomials on $S=[-1,1]$.
As this strong property is not expected to hold for general 
sets $S$ in \eqref{set-S}, we next show in Theorem \ref{weaker} that in general,
the polynomial
\[p^*_t\,:=\,\frac{1}{\sum_{g\in G_t}s(t-t_g)}\sum_{g\in G_t}g\cdot (\Lambda_t^{g\cdot\lambda_S})^{-1}\,,\quad t\in\N\,,\]
associated with $\lambda_S$ (now not necessarily constant (equal to $1$) as in Theorem \ref{th0})
has still a strong property related to the constant polynomial ``$1$". Namely  asymptotically, the sequence of probability measures
$(\mu_t:=p^*_t\lambda_S)_{t\in\N}$ (with densities $p^*_t$ w.r.t. $\lambda_S$) converges to $\lambda_S$ for the weak-$\star$ topology of $\mathscr{M}(S)$. That is, informally, the polynomial density $p^*_t$ 
``behaves" asymptotically like the constant (equal to $1$) density 
when integrating continuous functions against $p^*_t\,\lambda_S$. Hence somehow,
the vector of degree-$2t$ moments of $\lambda_S$ in the convex cone $(Q_t(G))^*$ are still
intimately related to the constant polynomial $1$ in $Q_t(G)$ (but not as directly as in Theorem \ref{th0}). 

Next, in Section \ref{log-det} we still consider again the constant polynomial $1$ and 
in Theorem \ref{th1} we show
that under a simple condition, indeed $1\in\mathrm{int}(Q_t(G)$ for all $t$, and therefore 
there exists a sequence of linear functional $(\phi_{2t})_{t\in\N}$ that satisfies \eqref{contrib-2} for all $t$.
For each $t$, the linear functional $\phi_{2t}$ is the unique optimal solution of a simple convex optimization problem with $\log\mathrm{det}$ criterion to maximize.
(In addition, in the case when $S=\{\x: g(\x)\geq0\}$ for some $g\in\R[\x]$, 
Lemma \ref{reverse} relates solutions to Pell's equation with  $\phi_{2t}$ and $S$.)

In Section \ref{sec:asymptotic} one is concerned with the asymptotic behavior of the linear functionals 
$(\phi_{2t})_{t\in\N}$ as $t$ grows, and Theorem \ref{th2} shows that there exists a limit moment sequence 
$\bphi$ which has a representing probability measure $\phi$ on $S$. Moreover $\phi$ satisfies \eqref{contrib-1} and is the equilibrium measure $\lambda_S$, if and only if finite convergence takes place, that is, for every $t\geq t_0$, $\bphi_{2t}$ is the vector of degree-$2t$ moments of $\phi$. So an interesting issue 
(not treated here) is to relate $\phi$ and $\lambda_S$ when the convergence is only asymptotic and not finite.

Finally in Section \ref{eq:numerics} we provide numerical examples 
of sets $S$ where \eqref{contrib-1} holds at least for $t=1,2,3$.

\subsection{Two preliminary results}
\label{preliminary}
For simplicity of exposition, we will consider sets $S$ in \eqref{set-S} for which
the quadratic polynomial $\x\mapsto R-\Vert\x\Vert^2$ belongs to $Q_1(G)$;
in particular, $S$ is contained in the Euclidean ball of radius $\sqrt{R}$ for some $R>0$, and 
the quadratic module $Q(G)$ is Archimedean; see e.g. \cite{CUP}. Let $\lambda_S$ be the equilibrium measure of $S$ (as described in e.g. \cite{bedford}) and recall that $g_0=1$ (so that $g_0\cdot\lambda_S=\lambda_S$). Let $\mathscr{C}(S)$ be the space of continuous functions on $S$.

\begin{ass}
\label{ass-1}
The set $S$ in \eqref{set-S} is compact with nonempty interior. Moreover,
there exists $R>0$ such that the 
quadratic polynomial $\x\mapsto \theta(\x):=R-\Vert\x\Vert^2$ is an element of $Q_1(G)$.
In other words, $h\in Q_1(G)$ is an ``algebraic certificate" that $S$ in \eqref{set-S} is compact.
\end{ass}

\begin{thm}
\label{th0}
With $S$  as in \eqref{set-S}, let Assumption \ref{ass-1} hold. 
Let $\bphi=(\phi_{\balpha})_{\balpha\in\N^n}$ (with $\phi_0=1$) be such that $\M_t(g\cdot\bphi)\succ0$ for all $t\in\N$ and all $g\in G$, so that the Christoffel functions $\Lambda^{g\cdot\phi}_t$ are all well defined
(recall that $\phi\in\R[\x]^*$ is the Riesz  linear functional associated with the moment sequence
$\bphi$). In addition, suppose that there exists $t_0\in\N$ such that
 \begin{equation}
 \label{th0-1}
 1\,=\,
 \frac{1}{\sum_{g\in G_t}s(t-t_g)}\,\sum_{g\in G_t} g\cdot(\Lambda^{g\cdot\phi}_{t-t_g})^{-1}\,,\quad\forall t\geq t_0\,.
 \end{equation}
  \indent Then $\phi$ is a Borel measure on $S$ and the 
 unique representing measure of $\bphi$. Moreover, if 
 $(S,g\cdot\phi)$ satisfies the Bernstein-Markov property for every $g\in G$, then 
 $\phi=\lambda_S$ and  therefore the Christoffel polynomials $(\Lambda^{g\cdot\lambda_S}_{t})^{-1}_{g\in G_t}$ satisfy the generalized Pell's equations:
  \begin{equation}
 \label{th0-2}
 1\,=\,\frac{1}{\sum_{g\in G_t}s(t-t_g)}\,\sum_{g\in G_t} g\cdot(\Lambda^{g\cdot\lambda_S}_{t-t_g})^{-1}\,,\quad\forall t\geq t_0\,.
 \end{equation}
 \end{thm}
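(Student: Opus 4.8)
The plan is to prove the two assertions in turn; note in passing that the first one (existence and uniqueness of a representing measure) uses only the positivity of the localizing matrices and not the generalized Pell equation \eqref{th0-1}, which is needed only to \emph{identify} the measure.

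\emph{Step 1: $\bphi$ has a unique representing measure on $S$.} First I would show that the Riesz functional $\phi$ is nonnegative on the quadratic module $Q(G)$ of \eqref{quad-module}. Take any $p=\sum_{g\in G}\sigma_g\,g\in Q(G)$ with $\sigma_g\in\Sigma[\x]$; writing each SOS weight as $\sigma_g(\x)=\v_s(\x)^T\Q_g\v_s(\x)$ with $\Q_g\succeq0$ for $s$ large enough and invoking \eqref{aux},
\[\phi(p)=\sum_{g\in G}(g\cdot\phi)(\sigma_g)=\sum_{g\in G}\langle\Q_g,\M_s(g\cdot\bphi)\rangle\,\geq\,0,\]
since $\Q_g\succeq0$ and $\M_s(g\cdot\bphi)\succ0$ by hypothesis. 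As $Q(G)$ is Archimedean under Assumption \ref{ass-1}, the dual form of Putinar's Positivstellensatz (see \cite{CUP}) furnishes a Borel measure on $S$ whose moment sequence is $\bphi$, which I keep calling $\phi$. Uniqueness is then immediate: $S$ being compact, polynomials are dense in $\mathscr{C}(S)$, so the moments of a measure supported on $S$ determine it, i.e. the $S$-moment problem is determinate.

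\emph{Step 2: a convergence estimate for each generator.} For each $g\in G$ the polynomial $g$ is nonnegative on $S$, so $g\cdot\phi$ is a genuine nonnegative Borel measure on $S$, and by assumption $(S,g\cdot\phi)$ enjoys the Bernstein--Markov property. Writing the normalized measures $d\nu^{(g)}_r:=d(g\cdot\phi)/(s(r)\,\Lambda^{g\cdot\phi}_r)$ and noting that their $\balpha$-moments equal $\tfrac{1}{s(r)}(g\cdot\phi)\big(\x^{\balpha}(\Lambda^{g\cdot\phi}_r)^{-1}\big)$, the weak-$\star$ convergence \eqref{weak-star} gives, for every fixed $\balpha\in\N^n$,
\[\lim_{r\to\infty}\frac{1}{s(r)}\,(g\cdot\phi)\big(\x^{\balpha}\,(\Lambda^{g\cdot\phi}_r)^{-1}\big)=\int_S\x^{\balpha}\,d\lambda_S.\]

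\emph{Step 3: testing the Pell equation against monomials.} Fix $\balpha\in\N^n$, multiply \eqref{th0-1} by $\x^{\balpha}$ and apply $\phi$. Using $\phi(\x^{\balpha}\,g\cdot(\Lambda^{g\cdot\phi}_{t-t_g})^{-1})=(g\cdot\phi)(\x^{\balpha}(\Lambda^{g\cdot\phi}_{t-t_g})^{-1})$ and setting $N_t:=\sum_{g\in G_t}s(t-t_g)$, I obtain
\[\int_S\x^{\balpha}\,d\phi=\sum_{g\in G_t}\frac{s(t-t_g)}{N_t}\,\Big[\frac{1}{s(t-t_g)}\,(g\cdot\phi)\big(\x^{\balpha}(\Lambda^{g\cdot\phi}_{t-t_g})^{-1}\big)\Big].\]
For $t$ large one has $G_t=G$, and the right-hand side is a finite convex combination (the weights $s(t-t_g)/N_t$ are nonnegative and sum to one) of quantities each converging, by Step 2, to $\int_S\x^{\balpha}\,d\lambda_S$ as $t\to\infty$; hence the whole right-hand side converges to $\int_S\x^{\balpha}\,d\lambda_S$. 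The left-hand side is independent of $t$, so $\int_S\x^{\balpha}\,d\phi=\int_S\x^{\balpha}\,d\lambda_S$ for every $\balpha$. Both measures being supported on the compact set $S$, determinacy forces $\phi=\lambda_S$, and \eqref{th0-2} is then exactly \eqref{th0-1} read with $\phi=\lambda_S$.

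\emph{Main obstacle.} The delicate point is Step 3: one must recognize the right-hand side of the tested identity as a convex combination of the normalized reciprocal-Christoffel integrals of Step 2 (this is precisely where the normalizing constant $\sum_{g\in G_t}s(t-t_g)$ is essential), and then use that a convex combination of sequences sharing a common limit inherits that limit. The analytic input---the Bernstein--Markov property of each $g\cdot\phi$ together with the weak-$\star$ statement \eqref{weak-star} identifying the common limit as the \emph{single} equilibrium measure $\lambda_S$ of $S$ (the same $\lambda_S$ for every $g$)---is assumed, so the remaining care is to check that these hypotheses legitimately apply to each $g\cdot\phi$ and that the degree bookkeeping ($r=t-t_g\to\infty$) is consistent throughout.
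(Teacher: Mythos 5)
Your proposal is correct and follows essentially the same route as the paper: existence and uniqueness of the representing measure via (dual) Putinar's Positivstellensatz from positivity of the localizing matrices under the Archimedean Assumption \ref{ass-1}, then testing \eqref{th0-1} against monomials $\x^{\balpha}$ and invoking the Bernstein--Markov asymptotics \eqref{weak-star} for each measure $g\cdot\phi$ to identify the common limit $\lambda_S$. Your convex-combination bookkeeping in Step 3 (weights $s(t-t_g)/N_t$ summing to one) is a slightly cleaner rendering of the paper's term-by-term limit computation, but it is the same argument.
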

 \begin{proof}
 In view of Assumption \ref{ass-1}, the quadratic module $Q(G)$ is Archimedean.
 Next, as $\M_t(g\cdot\bphi)\succ0$ for all $t\in\N$ and all $g\in G$, 
   then by Putinar's Positivstellensatz \cite{putinar}, $\bphi$ has a unique representing measure 
 on  $S$; that is, the Riesz linear functional $\phi$ associated with $\bphi$ is a Borel measure on $S$.  Next, write \eqref{th0-1} as
 \begin{equation}
 \label{aux0-1}
 1\,=\,
 \sum_{g\in G_t}g\cdot\frac{(\Lambda^{g\cdot\phi}_{t-t_g})^{-1}}{s(t-t_g)}\cdot\frac{s(t-t_g)}{\sum_{g\in G}s(t-t_g)}\,,\quad\forall t\geq t_0\,,\end{equation}
 and  let $\balpha\in\N^n$ be fixed arbitrary.
 As $(S,g\cdot\phi)$ satisfies the Bernstein-Markov property for every $g\in G$, then  by \cite[Theorem 4.4.4]{book}, 
   \[\lim_{t\to\infty}\int_S\x^{\balpha}\frac{(\Lambda^{g\cdot\phi}_t)^{-1}}{s(t)}\,g\,d\phi\,=\,\int_S \x^{\balpha}\,d\lambda_S\,,\quad\forall g\in G\,,\]
   where $\lambda_S$ is the equilibrium measure of $S$; see \cite{bedford,book}. Hence multiplying 
   \eqref{aux0-1} by $\x^{\balpha}$ and integrating w.r.t. $\phi$ yields
 \[
 \int_S\x^{\balpha}d\phi\,=\,
 \sum_{g\in G_t} \frac{s(t-t_g)}{\sum_{g\in G_t}s(t-t_g)}\cdot\int_S
 \frac{\x^{\balpha}\cdot(\Lambda^{g\cdot\phi}_{t-t_g})^{-1}}{s(t-t_g)}\,g\,d\phi\,,\quad\forall t\geq t_0\,.\]
 Each term of the product in the above sum
 of the right-hand-side has a limit as $t$ grows. Moreover $G_t=G$ for $t$ sufficiently large.
 Therefore taking limit as $t$ increases yields
  \begin{eqnarray*}
  \int_S\x^{\balpha}d\phi
  &=&\sum_{g\in G} \lim_{t\to\infty}\frac{s(t-t_g)}{\sum_{g\in G}s(t-t_g)}\cdot
  \lim_{t\to\infty}\int_S
 \frac{\x^{\balpha}\cdot(\Lambda^{g\cdot\phi}_{t-t_g})^{-1}}{s(t-t_g)}\,g\,d\phi\\
    &=&\int_S\x^{\balpha}\,d\lambda_S\cdot\sum_{g\in G}
  \lim_{t\to\infty}\sum_{g\in G} \frac{s(t-t_g)}{\sum_{g\in G}s(t-t_g)}\\
  &=&\int_S\x^{\balpha}\,d\lambda_S\cdot\sum_{g\in G}(\#G)^{-1}\,=\,\int_S\x^{\balpha}\,d\lambda_S\,.
    \end{eqnarray*}
  As $\balpha\in\N^n$ was arbitrary and $S$ is compact, then necessarily $\phi=\lambda_S$.
 \end{proof}
Theorem \ref{th0} reveals a strong property of Christoffel functions $\Lambda^{g\cdot\lambda_S}_t$ and is likely to hold 
only in some specific cases. The prototype example is $S=[-1,1]=\{\,x: g(x)\geq0\,\}$ with $x\mapsto g(x)=1-x^2$.  
Then indeed \eqref{pell-sum-1} is exactly \eqref{th0-2}, and by analogy with the Chebyshev univariate case, 
we propose to call Equation \eqref{th0-2} a \emph{generalized Pell's (polynomial) equation} of degree $2t$.  
It is satisfied by the polynomials $(g\cdot(\Lambda^{g\cdot\lambda_S}_{t-t_g})^{-1})_{g\in G_t}$, all of degree less than $2t$.
If true for all $t$, then $(S,\lambda_S)$ satisfies the generalized Pell's equations for all degrees.\\

Of course, to be valid \eqref{th0-2} requires conditions 
on $S$ and its representation \eqref{set-S} by the polynomials $g\in G$. 
For instance, as shown in in Section \ref{eq:numerics} below,  if $S$ is the $2D$-Euclidean unit ball with $g=1-\Vert\x\Vert^2$,
(in which case $G_t=G_1$ for all $t\geq1$), then $\lambda_S=d\x/(\pi\sqrt{1-\Vert\x\Vert^2})$ 
and we can show that \eqref{th0-2} holds for $t=1,2,3$.
Similarly, if $S$ is the $2D$-simplex $\{\x: x_1,x_2\geq0; x_1+x_2\leq1\}$, then
$\lambda_S=d\x/(\pi\sqrt{x_1\cdot x_2\cdot (1-x_1-x_2})$ and
we can show that \eqref{th0-2} holds  for $t=1,2,3$, for the quadratic generators in $G=\{g_0,g_1,g_2,g_3\}$
with $g_1(\x)=x_1\cdot(1-x_1-x_2)$, $g_2(\x)=x_2\cdot(1-x_1-x_2)$, and $g_3(\x)=x_1\cdot x_2$.\\

However in the general case we have the following weaker result, still related to Theorem \ref{th0}.
\begin{thm}
\label{weaker}
Let $\lambda_S$ be the equilibrium measure of $S$ and assume that for every $g\in G$,
$(S,g\cdot\lambda_S)$ satisfies the Bernstein-Markov property. For every $t$, define
the polynomial
\begin{equation}
\label{p-star}
p^*_t\,:=\,\frac{1}{\sum_{g\in G_t}s(t-t_g)}\sum_{g\in G_t}g\cdot (\Lambda_t^{g\cdot\lambda_S})^{-1}\,,\quad t\in\N\,.\end{equation}
Then the sequence of probability measures $(\mu_t:=p^*_t\lambda_S)_{t\geq t_0}$ converges to $\lambda_{S}$ for the weak-$\star$ topology of
$\mathscr{M}(S)$, i.e.,
\begin{equation}
 \lim_{t\to\infty}\int_S f\,p^*_t\,d\lambda_S\,=\,\int_S f\,d\lambda_S\,,\quad\forall f\in\mathscr{C}(S)\,.
\end{equation}
\end{thm}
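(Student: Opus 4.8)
The plan is to reduce the asserted weak-$\star$ convergence to convergence of individual moments, and then to recognise each moment of $\mu_t$ as a \emph{convex combination} of quantities governed by the equilibrium-measure asymptotics \eqref{weak-star}. First, each $\mu_t=p^*_t\lambda_S$ is a probability measure on the compact set $S$: taking traces in the identity $\int_S\v_{t-t_g}\,\v_{t-t_g}^T\,d(g\cdot\lambda_S)=\M_{t-t_g}(g\cdot\bphi)$ gives $\int_S(\Lambda^{g\cdot\lambda_S}_{t-t_g})^{-1}\,d(g\cdot\lambda_S)=s(t-t_g)$, and summing the summands $g\cdot(\Lambda^{g\cdot\lambda_S}_{t-t_g})^{-1}$ of $p^*_t$ over $g\in G_t$ against the normalising factor in \eqref{p-star} yields $\int_S p^*_t\,d\lambda_S=1$. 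Since the $\mu_t$ and $\lambda_S$ are all probability measures on a compact set, it then suffices to prove $\int_S\x^{\balpha}\,d\mu_t\to\int_S\x^{\balpha}\,d\lambda_S$ for every $\balpha\in\N^n$: for $f\in\mathscr{C}(S)$ and $\varepsilon>0$, Weierstrass' theorem gives a polynomial $q$ with $\sup_S\vert f-q\vert\le\varepsilon$, whence
\[
\Big\vert\int_S f\,d\mu_t-\int_S f\,d\lambda_S\Big\vert\le 2\varepsilon+\Big\vert\int_S q\,d\mu_t-\int_S q\,d\lambda_S\Big\vert\,,
\]
so monomial convergence and linearity in $q$ force $\limsup_t\vert\int_S f\,d\mu_t-\int_S f\,d\lambda_S\vert\le 2\varepsilon$, and $\varepsilon$ is arbitrary.

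Next, I would fix $\balpha$ and use $g\cdot(\Lambda^{g\cdot\lambda_S}_{t-t_g})^{-1}\,d\lambda_S=(\Lambda^{g\cdot\lambda_S}_{t-t_g})^{-1}\,d(g\cdot\lambda_S)$ together with \eqref{p-star} to write
\[
\int_S\x^{\balpha}\,d\mu_t=\sum_{g\in G_t}w_{g,t}\,I_{g,t}\,,\qquad w_{g,t}:=\frac{s(t-t_g)}{\sum_{h\in G_t}s(t-t_h)}\,,\qquad I_{g,t}:=\int_S\frac{\x^{\balpha}}{s(t-t_g)\,\Lambda^{g\cdot\lambda_S}_{t-t_g}}\,d(g\cdot\lambda_S)\,.
\]
The weights $w_{g,t}$ are nonnegative and sum to $1$, and for $t\ge\max_{g\in G}t_g$ the index set stabilises to $G_t=G$. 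By hypothesis $(S,g\cdot\lambda_S)$ has the Bernstein-Markov property for each $g\in G$, so applying \eqref{weak-star} (i.e. \cite[Theorem 4.4.4]{book}) to the reference measure $g\cdot\lambda_S$, along the degrees $t-t_g\to\infty$, gives $\lim_{t\to\infty}I_{g,t}=\int_S\x^{\balpha}\,d\lambda_S=:L_{\balpha}$ for every $g\in G$.

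Finally, since $\sum_{g\in G_t}w_{g,t}=1$, for $t$ large one has
\[
\Big\vert\int_S\x^{\balpha}\,d\mu_t-L_{\balpha}\Big\vert=\Big\vert\sum_{g\in G}w_{g,t}\,(I_{g,t}-L_{\balpha})\Big\vert\le\max_{g\in G}\vert I_{g,t}-L_{\balpha}\vert\,,
\]
which tends to $0$ because $G$ is finite and each $I_{g,t}\to L_{\balpha}$. This is exactly the monomial convergence needed in the reduction, completing the proof.

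The main obstacle is the second step: correctly casting $\int_S\x^{\balpha}\,d\mu_t$ as a convex combination whose weights sum to $1$, and justifying that \eqref{weak-star} applies to each shifted Christoffel function $\Lambda^{g\cdot\lambda_S}_{t-t_g}$ with a \emph{common} limit. The decisive point is that the limiting measure in \eqref{weak-star} is the equilibrium measure of $S$ \emph{irrespective of the reference measure}, so that all finitely many sequences $(I_{g,t})_t$ share the same limit $L_{\balpha}$; once this is in place, passing to the limit is just the elementary fact that a convex combination of finitely many sequences with a common limit converges to that limit.
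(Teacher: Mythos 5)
Your proof is correct and follows essentially the same route as the paper's: the trace identity $\langle\M_{t-t_g}(g\cdot\lambda_S),\M_{t-t_g}(g\cdot\lambda_S)^{-1}\rangle=s(t-t_g)$ showing each $\mu_t$ is a probability measure, the decomposition of the integral into the weighted sum over $g\in G_t$ with weights $w_{g,t}$, and \cite[Theorem 4.4.4]{book} applied to each reference measure $g\cdot\lambda_S$ under the Bernstein--Markov hypothesis, with the common limit $\lambda_S$ independent of $g$. Your only (harmless) deviations are the reduction to monomials via Weierstrass approximation --- the paper applies the asymptotic result directly to continuous $f$ --- and the bound $\vert\sum_{g}w_{g,t}(I_{g,t}-L_{\balpha})\vert\le\max_{g}\vert I_{g,t}-L_{\balpha}\vert$, which only uses $\sum_g w_{g,t}=1$ and is in fact slightly cleaner than the paper's term-by-term passage to the limit of the weights $w_{g,t}\to(\#G)^{-1}$.
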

\begin{proof}
The polynomial $p^*_t$ in \eqref{p-star} is well defined because the matrices $\M_{t-t_g}(g\cdot\lambda_S)$ are non singular.
Each $\mu_t$ is a probability measure on $S$ because
\begin{eqnarray*}
\int p^*_t\,d\lambda_{S}&=&
\frac{1}{\sum_{g\in G_t}s(t-t_g)}\sum_{g\in G_t}\int g\cdot (\Lambda_t^{g\cdot\lambda_{S}})^{-1}\,d\lambda_{S}\\
&=&\frac{1}{\sum_{g\in G_t}s(t-t_g)}\sum_{g\in G_t}\langle \M_{t-t_g}(g\cdot\lambda_{S}),\M_{t-t_g}(g\cdot\lambda_{S})^{-1}\rangle\,\\
&=&\frac{1}{\sum_{g\in G_t}s(t-t_g)}\sum_{g\in G_t}s(t-t_g)\,=\,1\,.
\end{eqnarray*}
As $(S,g\cdot\lambda_S)$ satisfies the Bernstein-Markov property for every $g\in G$, then  by \cite[Theorem 4.4.4]{book}, 
   \[\lim_{t\to\infty}\int_S f\,\frac{(\Lambda^{g\cdot\lambda_S}_t)^{-1}}{s(t)}\,g\,d\lambda_S\,=\,\int_S f\,d\lambda_S\,,\quad\forall f\in\mathscr{C}(S)\,,\:\forall g\in G\,.\]
Hence multiplying    \eqref{p-star} by $f\in\mathscr{C}(S)$ and integrating w.r.t. $\lambda_S$, yields
 \[ \int_S f\,d\mu_t\,=\,
 \int_S f\,p^*_t\,d\lambda_S\,=\,
 \sum_{g\in G_t} \frac{s(t-t_g)}{\sum_{g\in G_t}s(t-t_g)}\cdot\int_S
 \frac{f\cdot(\Lambda^{g\cdot\lambda_S}_{t-t_g})^{-1}}{s(t-t_g)}\,g\,d\lambda_S\,,\quad\forall t\geq t_0\,.\]
 Each term of the product in the above sum
 of the right-hand-side has a limit as $t$ grows. Moreover $G_t=G$ for $t$ sufficiently large.
 Therefore taking limit as $t$ increases, yields
  \begin{eqnarray*}
  \lim_{t\to\infty}\int_S f\,d\mu_t&=&
 \lim_{t\to\infty} \int_S f\,p^*_t\,d\lambda_S\\
  &=&\sum_{g\in G} \lim_{t\to\infty}\frac{s(t-t_g)}{\sum_{g\in G}s(t-t_g)}\cdot
  \lim_{t\to\infty}\int_S
 \frac{f\cdot(\Lambda^{g\cdot\lambda_S}_{t-t_g})^{-1}}{s(t-t_g)}\,g\,d\lambda_S\\
    &=&\int_S f\,d\lambda_S\cdot\sum_{g\in G}
  \lim_{t\to\infty}\sum_{g\in G} \frac{s(t-t_g)}{\sum_{g\in G}s(t-t_g)}\\
  &=&\int_S f\,d\lambda_S\cdot\sum_{g\in G}(\#G)^{-1}\,=\,\int_S f\,d\lambda_S\,,\quad\forall f\in\mathscr{C}(S)\,.
 \end{eqnarray*}
As $S$ is compact it implies that the sequence of probability measures $(\mu_t)_{t\in\N}\subset\mathscr{M}(S)_+$ 
converges to $\lambda_{S}$ for the weak-$\star$ topology $\sigma(\mathscr{M}(S),\mathscr{C}(S))$ of $\mathscr{M}(S)$. 
\end{proof}
In other words (and in an informal language), when integrating continuous functions against $\mu_t$, the density 
$p^*_t$ of $\mu_t$ w.r.t. $\lambda_{S}$ behaves asymptotically like the constant (equal to $1$) density.
That is, Theorem \ref{weaker} is a more general (but weaker) version of Theorem \ref{th0}.

\begin{coro}
\label{ortho-poly}
Let $\phi$ be the Borel measure on $S$  in Theorem \ref{th0}, and for each $g\in G$, let $(P^{g\cdot\phi}_{\balpha})_{\balpha\in\N^n}$ be
a family of polynomials, orthonormal with respect to the measure $g\cdot\phi$. Then for every $t\geq t_0+1$:
\begin{eqnarray}
\label{eq:ortho-poly}
 \sum_{g\in G_t} \sum_{\vert\balpha\vert=t-t_g} g\cdot(P^{g\cdot\phi}_{\balpha})^2
 &=&\sum_{g\in G_t}s(t-t_g)-\sum_{g\in G_t}s(t-t_g-1)\\
 \nonumber
&=&\sum_{g\in G_t}{n-1+t-t_g\choose n-1}\,.
\end{eqnarray}
\end{coro}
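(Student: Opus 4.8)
The plan is to reduce \eqref{eq:ortho-poly} to a telescoping of Christoffel functions in the degree parameter, and then to invoke the generalized Pell's equation \eqref{th0-1} at the two consecutive levels $t$ and $t-1$. Throughout I write $N_t:=\sum_{g\in G_t}s(t-t_g)$ for the normalizing constant appearing in \eqref{th0-1}, and I adopt the convention $s(-1)=0$ together with $\Lambda^{g\cdot\phi}_{-1}(\x)^{-1}\equiv0$ (the empty sum over $\N^n_{-1}=\emptyset$), which will handle the boundary generators cleanly.

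First, for each fixed $g\in G_t$ I would use the orthonormal-polynomial form \eqref{ortho-poly-0} of the Christoffel function associated with $g\cdot\phi$, namely $(\Lambda^{g\cdot\phi}_{t-t_g})^{-1}=\sum_{\vert\balpha\vert\le t-t_g}(P^{g\cdot\phi}_{\balpha})^2$, and split off the top-degree layer. Since $\sum_{\vert\balpha\vert\le t-t_g-1}(P^{g\cdot\phi}_{\balpha})^2=(\Lambda^{g\cdot\phi}_{t-t_g-1})^{-1}$, this gives the layer identity
\[\sum_{\vert\balpha\vert=t-t_g}(P^{g\cdot\phi}_{\balpha})^2=(\Lambda^{g\cdot\phi}_{t-t_g})^{-1}-(\Lambda^{g\cdot\phi}_{t-t_g-1})^{-1},\qquad g\in G_t,\]
valid also for the generators with $t_g=t$ by the convention above (there the left side is $(P^{g\cdot\phi}_{\mathbf{0}})^2=(\Lambda^{g\cdot\phi}_0)^{-1}$ and the subtracted term vanishes).

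Next, I would multiply this identity by $g$ and sum over $g\in G_t$. The first resulting sum $\sum_{g\in G_t}g\cdot(\Lambda^{g\cdot\phi}_{t-t_g})^{-1}$ equals the constant $N_t$ by \eqref{th0-1} at level $t$. For the second sum $\sum_{g\in G_t}g\cdot(\Lambda^{g\cdot\phi}_{t-t_g-1})^{-1}$, the generators with $t_g=t$ (i.e.\ those in $G_t\setminus G_{t-1}$) contribute the zero degree-$(-1)$ Christoffel function, so the sum effectively runs over $G_{t-1}$ and equals $\sum_{g\in G_{t-1}}g\cdot(\Lambda^{g\cdot\phi}_{(t-1)-t_g})^{-1}=N_{t-1}$ by \eqref{th0-1} at level $t-1$; it is precisely here that the hypothesis $t\ge t_0+1$ is needed, so that the Pell equation is available at level $t-1\ge t_0$. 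Hence the left-hand side of \eqref{eq:ortho-poly} equals the constant $N_t-N_{t-1}$.

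Finally, I would identify $N_{t-1}$ with the claimed second summand: $N_{t-1}=\sum_{g\in G_{t-1}}s(t-1-t_g)=\sum_{g\in G_t}s(t-t_g-1)$, the last equality holding because the extra generators ($t_g=t$) contribute $s(-1)=0$. This yields the first equality in \eqref{eq:ortho-poly}, and the second follows by the Pascal identity $s(k)-s(k-1)=\binom{n+k}{n}-\binom{n+k-1}{n}=\binom{n+k-1}{n-1}$ applied termwise with $k=t-t_g$. The only real obstacle is the boundary bookkeeping when $G_t\neq G_{t-1}$: one must check that the conventions $\Lambda^{g\cdot\phi}_{-1}{}^{-1}\equiv0$ and $s(-1)=0$ make the two applications of \eqref{th0-1} (at levels $t$ and $t-1$, over the differing index sets $G_t$ and $G_{t-1}$) align correctly; everything else is routine telescoping and Pascal's rule.
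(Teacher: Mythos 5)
Your proposal is correct and takes essentially the same route as the paper's proof: split $(\Lambda^{g\cdot\phi}_{t-t_g})^{-1}$ via the orthonormal expansion \eqref{ortho-poly-0} into $(\Lambda^{g\cdot\phi}_{t-t_g-1})^{-1}$ plus the top-degree layer, multiply by $g$, sum over $G_t$, and apply \eqref{th0-1} at levels $t$ and $t-1$ (which is exactly where $t\geq t_0+1$ enters), finishing with Pascal's rule. Your explicit boundary bookkeeping for generators with $t_g=t$ (the conventions $s(-1)=0$ and $(\Lambda^{g\cdot\phi}_{-1})^{-1}\equiv 0$, aligning the index sets $G_t$ and $G_{t-1}$) is a detail the paper leaves implicit, but the argument is the same.
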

\begin{proof}
 Recalling \eqref{ortho-poly-0}, for each $g\in G_t$ with $t\geq t_0+1$:
  \begin{eqnarray*}
  (\Lambda^{g\cdot\phi}_{t-t_g})^{-1}\,=\,\sum_{\balpha\in\N^n_{t-t_g}} (P^{g\cdot\phi}_{\balpha})^2&=&
 \sum_{\vert\balpha\vert<t-t_g} (P^{g\cdot\phi}_{\balpha})^2+
 \sum_{\vert\balpha\vert=t-t_g} (P^{g\cdot\phi}_{\balpha})^2\\
 &=& (\Lambda^{g\cdot\phi}_{t-t_g-1})^{-1}+\sum_{\vert\balpha\vert=t-t_g} (P^{g\cdot\phi}_{\balpha})^2\,,
 \end{eqnarray*}
which combined with \eqref{th0-1} yields \eqref{eq:ortho-poly}.
\end{proof}
\begin{remark}
\label{rem-pell}
Observe that \eqref{eq:ortho-poly} which states a property satisfied by orthonormal polynomials 
associated with $g\cdot\phi$, $g\in G_t$, is a multivariate and multi-generator analogue of \eqref{cheby-orthonormal}, the polynomial Pell's equation satisfied by normalized Chebyshev polynomials. However there are several differences between \eqref{eq:ortho-poly} and
\eqref{cheby-orthonormal}.

In \eqref{cheby-orthonormal}, where $G=\{g\}$ with $g=(1-x^2)$ (and so with $t_g=1$), the \emph{triplet} 
$(\widehat{T}_t, -g,\widehat{U}_{t-t_g})$ is a solution to the polynomial Pell equation
$C^2-F\,H^2=1$ which involves single squares $C^2$ and $H^2$ and a single generator $F$.
On the other hand, \eqref{eq:ortho-poly} addresses the 
\emph{multivariate} case with possibly \emph{several} generators $g\in G_t$ and in compact form reads
$\sum_{g\in G_t}g\,C_g=1$
which now involves SOS polynomials $(C_g)_{g\in G_t}$ and several generators $g\in G_t$. 

For instance, in case of a single generator $G=\{g\}$, in compact form \eqref{eq:ortho-poly} reads $C-F\,H=1$ with now  \emph{SOS polynomials} $C,H$ and generator $F=-g$. Writing 
$C=\sum_{i=1}^r C_i^2$ and $H=\sum_{j=1}^q H_j^2$, 
\[1\,=\,C-F\,H\,=\,\sum_{i=1}^r C_i^2- \sum_{j=1}^q F\,H_i^2\,.\]
This is why we think that it is fair to call \eqref{eq:ortho-poly} (as well as \eqref{contrib-1}) a \emph{generalized} polynomial Pell equation where SOS (rather than just single squares) are allowed. 
In fact, even in the univariate Chebyshev case, when summing over $n$, \eqref{pell-sum-0} is a generalized Pell equation in the form $C-F\,H=1$ with SOS polynomials $C,H$. 
\end{remark}
\subsection{A convex optimization problem and its dual}
\label{log-det}
In Theorem \ref{th0} we have taken for granted existence of  a linear functional $\phi$ 
such that its moment sequence $\bphi$ satisfies \eqref{th0-1}. The next issue is: 

\emph{Given a compact set $S$ as in \eqref{set-S}, can we provide such a moment sequence 
$\bphi$?
At least, can we define a numerical scheme which provides finite sequences $(\bphi_{2t})_{t\in\N}$
which ``converge" to such  a desirable $\bphi$ as $t$ grows? }

As we next see, this issue essentially translates to the following simple issue in real algebraic geometry. 
Do we have $1\in\mathrm{int}(Q_t(G))$ for every $t\in\N$? 
If the answer is yes then indeed such a $\bphi$ exists. But then the associated linear functional $\phi$ will satisfy \eqref{th0-1}
only if the convergence is \emph{finite}. Moreover the conditions can be checked by solving a 
sequence of convex optimization problems described in the next section.

With $t_g:=\lceil\mathrm{deg}(g)/2\rceil$, for every $t\in\N$,  consider the two convex optimization problems:
\begin{equation}
 \label{primal}
 \begin{array}{rl}
\rho_t\,=\, \displaystyle\inf_{\bphi_{2t}}&\{\:-\displaystyle\sum_{g\in G_t} \log\mathrm{det}(\M_{t-t_g}(g\cdot \bphi_{2t}))\,:\: \phi_{2t}(1)\,=\,1\,;\\
 \mbox{s.t.}&\M_{t-t_g}(g\cdot\bphi_{2t})\,\succeq\,0\,,\:\forall g\in G_t\,\}\,,
  \end{array}\,.
\end{equation}
and:
\begin{equation}
 \label{dual}
 \begin{array}{rl}
 \rho^*_t\,=\,\displaystyle\sup_{\Q_g}&\{\:\displaystyle\sum_{g\in G_t} \log\mathrm{det}(\Q_g)\,:\:\Q_g\succeq0\,,\:\forall g\in G_t\,;\\
 \mbox{s.t.}& \displaystyle\sum_{g\in G_t} s(t-t_g)\,=\,
 \sum_{g\in G_t}g(\x)\cdot\v_{t-t_g}(\x)^T\Q_g\v_{t-t_g}(\x)\,,\: \x\in\R^n\,\}\,.
 \end{array}\,.
\end{equation}
Problem \eqref{primal} and \eqref{dual} are convex optimization problems.

\begin{thm}
\label{th1}
With $t\in\N$ fixed, Problems \eqref{primal} and \eqref{dual} have same finite optimal value $\rho_t=\rho_t^*$
if and only if
$1\in \mathrm{int}(Q_t(G))$. Then both have 
a unique optimal solution $\bphi^*_{2t}\in\R^{s(2t)}$ and $(\Q^*_g)_{g\in G_t}$ respectively, which satisfy
$\Q_g^*=\M_{t-t_g}(g\cdot\bphi^*_{2t})^{-1}$ for all $g\in G_t$. Therefore
\begin{eqnarray}
 \nonumber
1&=&\frac{1}{ \displaystyle\sum_{g\in G_t} s(t-t_g)}\,
 \sum_{g\in G_t}g(\x)\,\v_{t-t_g}(\x)^T\M_{t-t_g}(g\cdot\bphi^*_{2t})^{-1}\v_{t-t_g}(\x)\\
 \label{eq:th1-1}
  &=&\frac{1}{ \displaystyle\sum_{g\in G_t} s(t-t_g)}\, \sum_{g\in G_t}g(\x)\,\Lambda^{g\cdot\bphi^*_{2t}}_{t-t_g}(\x)^{-1}\,,\quad\forall  \x\in\R^n\,.
\end{eqnarray}
\end{thm}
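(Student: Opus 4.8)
The plan is to recognise \eqref{primal} and \eqref{dual} as a primal--dual pair of $\log\det$ conic programs, coupled through the pairing identity \eqref{aux}, and to prove the statement by combining an elementary weak-duality inequality with the first-order (KKT) conditions of the primal. Throughout I write $\M_g(\bphi):=\M_{t-t_g}(g\cdot\bphi)$ and $k_g:=s(t-t_g)$, and I exploit that $g_0=1\in G_t$, which will be the workhorse of both the interior characterisation and the uniqueness claims.

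First I would establish weak duality $\rho^*_t\le\rho_t$. Given any primal-feasible $\bphi$ and dual-feasible $(\Q_g)$, applying the Riesz functional $\phi$ to the polynomial identity in \eqref{dual} and using \eqref{aux} gives $\sum_{g\in G_t}\langle\Q_g,\M_g(\bphi)\rangle=\sum_{g\in G_t}k_g$ (because $\phi(1)=1$). Since each $\M_g(\bphi),\Q_g\succ0$ on the domain where the objectives are finite, the scalar inequality $\mu-\log\mu\ge1$ applied to the eigenvalues of $\Q_g\M_g(\bphi)$ yields $\langle\Q_g,\M_g(\bphi)\rangle\ge\log\det\Q_g+\log\det\M_g(\bphi)+k_g$, with equality if and only if $\Q_g\M_g(\bphi)=I$. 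Summing over $g\in G_t$ and substituting the previous identity gives $\sum_g\log\det\Q_g\le-\sum_g\log\det\M_g(\bphi)$, i.e. $\rho^*_t\le\rho_t$, together with the complementary-slackness condition that equality forces $\Q_g=\M_g(\bphi)^{-1}$.

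Next I would settle the equivalence and uniqueness. For ($\Rightarrow$): if $\rho^*_t$ is finite the dual admits a feasible point with finite $\log\det\Q_g$, hence $\Q_g\succ0$, so the positive constant $\sum_g k_g$ has a representation $\sum_g g\,\v_{t-t_g}^T\Q_g\v_{t-t_g}$ with all $\Q_g\succ0$; replacing $\Q_g$ by $\Q_g+\delta R_g$ and using that the $g_0$-term $\v_t^T R_{g_0}\v_t$ already ranges over all of $\R[\x]_{2t}$ (Gram surjectivity), this representation survives small perturbations, so the constant lies in $\mathrm{int}(Q_t(G))$ and scaling gives $1\in\mathrm{int}(Q_t(G))$. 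For ($\Leftarrow$): $1\in\mathrm{int}(Q_t(G))$ yields, exactly as above, a strictly positive-definite (Slater) representation of $\sum_g k_g$, so the dual is strictly feasible; moreover, since $S$ has nonempty interior (Assumption \ref{ass-1}) there is a primal point $\bphi^0$ with $\phi_0=1$ and all $\M_g(\bphi^0)\succ0$, so $\rho_t$ is finite. The same condition makes the base $\{\bphi:\phi_0=1,\,\M_g(\bphi)\succeq0\}$ of the dual cone $Q_t(G)^*$ compact (via \eqref{aux}), while the identity $\sum_g\langle\Q_g,\M_g(\bphi^0)\rangle=\sum_g k_g$ bounds the dual feasible set; as both $\log\det$ objectives blow up at the positive-semidefinite boundary, both optima are attained at positive-definite points. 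Uniqueness then follows from strict convexity of $-\log\det$ and strict concavity of $\log\det$: the map $\bphi\mapsto\M_{g_0}(\bphi)=\M_t(\bphi)$ records every degree-$2t$ moment and is injective, so the primal objective is strictly convex in $\bphi$, and the separated dual variables make $\sum_g\log\det\Q_g$ strictly concave.

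Finally, at the attained primal optimum $\bphi^*$ the positive-semidefinite constraints are inactive, so stationarity holds with a single multiplier $\lambda$ for $\phi_0=1$. Differentiating $-\sum_g\log\det\M_g(\bphi)$ and reading $\langle\M_g(\bphi)^{-1},\partial\M_g/\partial\phi_\balpha\rangle$ through \eqref{aux} as the coefficient of $\x^\balpha$ in $g\,\v_{t-t_g}^T\M_g(\bphi)^{-1}\v_{t-t_g}=g\,\Lambda^{g\cdot\phi}_{t-t_g}(\x)^{-1}$, the stationarity equations say precisely that $\sum_{g\in G_t}g\,\Lambda^{g\cdot\bphi^*}_{t-t_g}$ has all nonconstant coefficients equal to zero, i.e. it is the constant $\lambda$; applying $\phi^*$ and using $\langle\M_g^{-1},\M_g\rangle=k_g$ evaluates $\lambda=\sum_g k_g$, which is \eqref{eq:th1-1}. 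Setting $\Q^*_g:=\M_g(\bphi^*)^{-1}\succ0$ makes $(\Q^*_g)$ dual-feasible with value $-\sum_g\log\det\M_g(\bphi^*)=\rho_t$, so $\rho^*_t\ge\rho_t$ and hence $\rho_t=\rho^*_t$, while complementary slackness confirms $\Q^*_g=\M_{t-t_g}(g\cdot\bphi^*)^{-1}$. I expect the main obstacle to be the analytic step in ($\Leftarrow$): converting $1\in\mathrm{int}(Q_t(G))$ into compactness of the relevant feasible slices, and hence into attainment and a valid Slater/KKT setup, since the $\log\det$ objectives are defined and finite only on the open positive-definite cone and diverge at its boundary.
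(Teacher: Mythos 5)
Your proposal is correct, but it takes a genuinely different route from the paper. The paper delegates the entire hard core of the theorem --- the equivalence of $1\in\mathrm{int}(Q_t(G))$ with the existence and uniqueness of a distinguished $\bphi^*_{2t}\in Q_t(G)^*$ representing the constant through inverse localizing matrices --- to Nesterov's duality theorem \cite[Theorem 17.7]{nesterov}, applied to the cone $K(\bar q)$ with the monomial functional system and the generators $g\in G_t$; it then only adds the weak-duality verification, using exactly your trace inequality in the packaged form of Lemma \ref{lem-logdet} (proved there via Legendre--Fenchel conjugacy of $\log\mathrm{det}$, whereas you prove it by applying $\mu-\log\mu\geq1$ to the eigenvalues of $\Q_g\M_{t-t_g}(g\cdot\bphi)$ --- the same content, including the equality case $\Q_g=\M_{t-t_g}(g\cdot\bphi)^{-1}$). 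You instead re-prove the needed special case of Nesterov's theorem from scratch: interiority $\Leftrightarrow$ strict dual feasibility via the Gram-surjectivity perturbation of the $g_0$-block (and, for ``$\Leftarrow$'', the splitting $1=(1-\epsilon q)+\epsilon q$ with $q=\sum_g g\,\v_{t-t_g}^T\v_{t-t_g}$, which your ``exactly as above'' should spell out); attainment via compactness of the base $\{\phi_{2t}(1)=1\}\cap Q_t(G)^*$, which is indeed the standard consequence of $1\in\mathrm{int}(Q_t(G))$, combined with the boundary blow-up of $\log\mathrm{det}$; uniqueness via strict convexity, where your observation that $g_0=1\in G_t$ makes $\bphi\mapsto\M_t(\bphi)$ injective is precisely the right point; and the identity \eqref{eq:th1-1} from KKT stationarity, evaluating the multiplier $\lambda=\sum_{g\in G_t}s(t-t_g)$ by applying $\phi^*_{2t}$ and using $\langle\M_{t-t_g}(g\cdot\bphi^*_{2t})^{-1},\M_{t-t_g}(g\cdot\bphi^*_{2t})\rangle=s(t-t_g)$. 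Your route buys self-containedness and exposes the mechanism (the stationarity equations literally say that $\sum_{g\in G_t}g\cdot(\Lambda^{g\cdot\bphi^*_{2t}}_{t-t_g})^{-1}$ is a constant), at the cost of length; the paper's citation buys brevity and, through Nesterov, the representation for an arbitrary $p\in\mathrm{int}(Q_t(G))$ rather than only constants. One presentational caveat, not a gap: your primal Slater point invokes the nonempty interior of $S$, i.e.\ Assumption \ref{ass-1}, which is a standing hypothesis of the section though absent from the theorem's statement, and it tacitly uses the easy remark that a nonzero $g\in G$ cannot vanish identically on a set with nonempty interior, so that $\M_{t-t_g}(g\cdot\bphi^0)\succ0$ for, say, normalized Lebesgue measure on $S$.
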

\begin{proof}
 For every fixed $t$,  the convex cone $Q_t(G)$ is a  particular case of the 
 convex cone $K(\bar{q})$ investigated in Nesterov \cite[p. 415, Section 2.2]{nesterov}
 when the functional system $\{v(\x)\}$ in \cite{nesterov} is the set of
 monomials $(\x^{\balpha})_{\balpha\in\N^n_{2t}}$  and 
 the functions $(\bar{q}_1,\ldots,\bar{q}_l)$ are our polynomials $g$ in $G_t$.
 Then 
  \[K(\bar{q}_1,\ldots\bar{q}_l)^*\,=\,Q_t(G)^*\,=\,\{\,\bphi_{2t}: \M_{t-t_g}(g\cdot\bphi)\succeq0\,,\:g\in G_t\,\}\,.\]
  By \cite[Theorem 17.7]{nesterov}  
 \[p\in\mathrm{int}(K(\bar{q}_1,\ldots\bar{q}_l))\quad\mbox{if and only if}\quad 
 p\,=\,\sum_{g\in G_t}g\cdot
 \v_{t-t_g}(\x)^T\M_{t-t_g}(g\cdot\bphi_p)^{-1}\v_{t-t_g}(\x)\,,\]
 for some unique $\bphi_p\in K(\bar{q}_1,\ldots\bar{q}_l)^*$. In addition, 
  letting $\Q_g:=\M_{t-t_g}(g\cdot\phi_p)^{-1}$, $g\in G_t$, 
 the sequence $(\Q_g)_{g\in G_t}$ is the unique solution  of \eqref{dual}, 
 with $p$ instead of $\sum_{g\in G_t}s(t-t_g)$ in the left-hand-side 
of the constraint. Therefore, by \cite[Theorem 17.7]{nesterov}  for the constant polynomial $p=1$,
 \[1\,\in\,\mathrm{int}(Q_t(G))\quad\Leftrightarrow\quad 
  1\,=\,\sum_{g\in G_t}g\cdot
 \v_{t-t_g}(\x)^T\M_{t-t_g}(g\cdot\bphi)^{-1}\v_{t-t_g}(\x)\,,\]
 for some distinguished $\bphi\in Q_t(G)^*$.
 Then as $1\in\mathrm{int}(Q_t(G))$ for every $t$, letting $p$ be the constant polynomial 
 $\sum_{g\in G_t}s(t-t_g)$, one obtains 
    \[\sum_{g\in G_t}s(t-t_g)\,=\,\sum_{g\in G_t}g\cdot
 \v_{t-t_g}(\x)^T\M_{t-t_g}(g\cdot\bphi^*_{2t})^{-1}\v_{t-t_g}(\x)\,,\]
 for some unique $\bphi^*_{2t}\in Q_t(G)^*$,
  and $\Q^*_g:=\M_{t-t_g}(g\cdot\bphi^*_{2t})^{-1}$,  $g\in G_t$, is the unique optimal solution of \eqref{dual}. Next, $\bphi^*_{2t}$ is a feasible solution
  of \eqref{primal}, and
 \[\sum_{g\in G_t}\log\mathrm{det}(\Q^*_g)\,=\,-\sum_{g\in G_t}\log\mathrm{det}(\M_{t-t_g}(g\cdot\bphi^*_{2t}))\,\geq\,\rho_t\,.\]
  We next prove weak duality, i.e., $\rho^*_t\leq\rho_t$, so that 
  $\bphi^*_{2t}$ (resp. $(\Q^*_g)_{g\in G_t}$) is the unique optimal
  solution of \eqref{primal} (resp. \eqref{dual}) and $\rho_t=\rho^*_t$.
 So let $\bphi_{2t}$ (resp. $(\Q_g)_{g\in G_t}$) be an arbitrary feasible solution  of \eqref{primal} (resp. \eqref{dual}). Then by Lemma \ref{lem-logdet}, for every $g\in G_t$,
 \[s(t-t_g)+\log\mathrm{det}(\M_{t-t_g}(g\cdot\bphi_{2t}))+\log\mathrm{det}(\Q_g)\,\leq\,\langle\M_{t-t_g}(g\cdot\bphi_{2t}),\Q_g\rangle\,.\]
  In addition, as $\phi_{2t}(1)=1$
  \begin{eqnarray*}
  \sum_{g\in G_t}s(t-t_g)&=&\phi_{2t}(\sum_{g\in G_t}s(t-t_g))
  \,=\,  \sum_{g\in G_t}\phi_{2t}(g(\x)\,\v_{t-t_g}(\x)^T\Q_g\v_{t-t_g}(\x))\\
 &=& \sum_{g\in G_t}g\cdot\phi_{2t}(\v_{t-t_g}(\x)^T\Q_g\v_{t-t_g}(\x))\\
  &=&\sum_{g\in G_t}\langle \Q_g, \M_{t-t_g}(g\cdot\bphi_{2t})\rangle\quad\mbox{[by \eqref{aux}]}\\
  &\geq&\sum_{g\in G_t}[s(t-t_g)+\log\mathrm{det}(\M_{t-t_g}(g\cdot\bphi_{2t}))+\log\mathrm{det}(\Q_g)\,]\,,
  \end{eqnarray*}
 from which we deduce weak duality, that is,
  \[\sum_{g\in G_t}\log\mathrm{det}(\Q_g)\,\leq\,-\sum_{g\in G_t}\log\mathrm{det}(\M_{t-t_g}(g\cdot\bphi_{2t}))\,.\]
 \end{proof}
So as one can see, \eqref{eq:th1-1} is a multivariate analogue of \eqref{pell-sum-1}.  
Crucial in Theorem \ref{th1} is the condition
$1\in\mathrm{int}(Q_t(G))$ for all $t$. Below is a simple sufficient condition.
\begin{lemma}
\label{cond-int}
Let $S$ be as in \eqref{set-S} with $G=\{g_0,g_1,\ldots,g_m\}$, and let Assumption \ref{ass-1} hold.
Then  $1\in\mathrm{int}(Q_t(G))$ for every $t$.
\end{lemma}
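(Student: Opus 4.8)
The plan is to prove the statement through the dual cone of $Q_t(G)$. Since $Q_t(G)\supseteq\Sigma[\x]_t$ and the SOS cone $\Sigma[\x]_t$ has nonempty interior in $\R[\x]_{2t}$, the cone $Q_t(G)$ has nonempty interior, so $\mathrm{int}(Q_t(G))=\mathrm{int}(\overline{Q_t(G)})$ and $(\overline{Q_t(G)})^*=Q_t(G)^*$. Thus I would invoke the standard duality for a closed convex cone with nonempty interior: $1\in\mathrm{int}(Q_t(G))$ if and only if $\langle\bphi,1\rangle=\phi_0>0$ for every nonzero $\bphi\in Q_t(G)^*$. Using the description of the dual cone already recorded in the excerpt, namely $Q_t(G)^*=\{\bphi_{2t}:\M_{t-t_g}(g\cdot\bphi)\succeq0,\ g\in G_t\}$, and noting that $\phi_0\geq0$ automatically (because $1\in Q_t(G)$), the whole lemma reduces to the single implication: \emph{if $\bphi\in Q_t(G)^*$ and $\phi_0=0$, then $\bphi=0$.}

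To prove that implication I would first exploit $\M_t(\bphi)\succeq0$ (the case $g=g_0=1$): its $(0,0)$ entry is $\phi_0=0$, so the whole $0$-indexed row and column vanish, giving $\phi_{\balpha}=0$ for all $\vert\balpha\vert\leq t$. The crucial step is then to propagate this vanishing up to degree $2t$ using Assumption \ref{ass-1}. For any $p\in\R[\x]_{t-1}$ one has $p^2\theta\in Q_t(G)$: writing $\theta=\sum_{g\in G_1}\sigma_g\,g$ with $\mathrm{deg}(\sigma_g g)\leq2$, the polynomial $p^2\theta=\sum_g(p^2\sigma_g)\,g$ is a sum of the $g$'s weighted by the SOS multipliers $p^2\sigma_g$ of degree at most $2t-\mathrm{deg}(g)$. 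Hence $\phi(p^2\theta)\geq0$, i.e.
\[
\sum_{i=1}^n \phi\big((x_i p)^2\big)\,=\,\phi\big(p^2\Vert\x\Vert^2\big)\,\leq\,R\,\phi(p^2)\,.
\]
If $\phi(p^2)=0$, the right-hand side is $0$, while each term $\phi((x_i p)^2)\geq0$ (as $(x_ip)^2\in Q_t(G)$ when $p\in\R[\x]_{t-1}$); therefore $\phi((x_i p)^2)=0$ for every $i$.

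Starting from $p=1$ (where $\phi(1)=\phi_0=0$) and iterating this step one variable at a time, I would conclude $\phi(m^2)=0$ for every monomial $m$ of degree at most $t$, since each intermediate monomial stays in $\R[\x]_{t-1}$ until the final multiplication reaches degree $t$. Finally, applying the Cauchy--Schwarz inequality to the positive semidefinite bilinear form $(p,q)\mapsto\phi(pq)$ on $\R[\x]_t$ gives $\phi(\x^{\balpha+\bbeta})^2\leq\phi(\x^{2\balpha})\phi(\x^{2\bbeta})=0$ for all $\vert\balpha\vert,\vert\bbeta\vert\leq t$, hence $\phi_{\bgamma}=0$ for all $\vert\bgamma\vert\leq2t$, i.e. $\bphi=0$. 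This closes the reduction and proves $1\in\mathrm{int}(Q_t(G))$ for every $t$. I expect the propagation step to be the main obstacle: the delicate points are verifying the degree bookkeeping that places $p^2\theta$ inside the \emph{truncated} module $Q_t(G)$, and handling small $t$ (e.g.\ $t=1$, where the iteration degenerates to $p=1$ directly); the duality reduction itself is routine once nonempty interior is noted, so closedness of $Q_t(G)$ need not be established separately.
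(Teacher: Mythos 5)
Your proof is correct, and it takes a genuinely different route from the paper's. The paper argues primally and constructively: starting from the identity $(R+1)^t=(1+\Vert\x\Vert^2)^t+\theta(\x)\sum_{j=0}^{t-1}(R+1)^j(1+\Vert\x\Vert^2)^{t-j-1}$ of \cite[Lemma 3.4]{ctp}, in which both terms have diagonal positive definite Gram matrices $\G_0,\G_1$, it subtracts $\delta\,\W$ from $\G_0$ (where $\W$ is a Gram matrix of $\sum_{g\in G_t}g\cdot\v_{t-t_g}^T\I_{t-t_g}\v_{t-t_g}$ and $\delta>0$ is small) and expands $\theta\in Q_1(G)$, thereby exhibiting a representation of the constant $(R+1)^t$ in which \emph{every} generator $g\in G_t$ carries a positive definite Gram matrix; this explicitly certifies $(R+1)^t\in\mathrm{int}(Q_t(G))$, hence $1\in\mathrm{int}(Q_t(G))$ by scaling. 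You instead argue dually: having observed $\mathrm{int}(Q_t(G))\supseteq\mathrm{int}(\Sigma[\x]_t)\neq\emptyset$ in $\R[\x]_{2t}$ (so that, as you rightly note, closedness of $Q_t(G)$ is irrelevant and the separation argument applies to $\overline{Q_t(G)}$), you reduce the lemma to the pointedness-type statement that $\bphi\in Q_t(G)^*$ with $\phi_0=0$ forces $\bphi=0$, proved via the propagation inequality $R\,\phi(p^2)\geq\sum_i\phi((x_ip)^2)$ for $\deg p\leq t-1$ — legitimate, since $p^2\theta=\sum_g (p^2\sigma_g)g\in Q_t(G)$ with $\deg(p^2\sigma_g g)\leq 2(t-1)+2=2t$ — followed by induction on monomial degree and Cauchy--Schwarz for the psd form $(p,q)\mapsto\phi(pq)$ on $\R[\x]_t$. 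All steps check out, including the degenerate case $t=1$ that you flag. It is worth noting that your propagation device is exactly the one the paper uses in the proof of Theorem \ref{th2}(i) to obtain $\phi_{2t}(x_i^{2t})\leq R^t$, so your argument shows in passing that the normalized slice $\{\bphi_{2t}\in Q_t(G)^*:\phi_{2t}(1)=1\}$ is compact — a fact the paper needs there anyway. The trade-off: the paper's route produces an explicit interior certificate (strictly positive Gram matrices on each generator, which is also what makes $\rho_t$ in \eqref{primal} finite in concrete computations), whereas yours is shorter and self-contained, avoiding the external identity from \cite{ctp} and the Gram-matrix perturbation, at the modest price of invoking a separation theorem and the dual-cone description $Q_t(G)^*=\{\bphi_{2t}:\M_{t-t_g}(g\cdot\bphi)\succeq0,\ g\in G_t\}$, which the paper already has via \eqref{aux} and \cite{nesterov}.
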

For clarity of exposition the proof is postponed to Section  \ref{appendix}.

\begin{remark}
Let $n=1$ and $S=[-1,1]=\{x\in\R: g(x)\geq0\}$ with $x\mapsto g(x)=1-x^2$.
Then $G=\{g\}$, the unique optimal solution 
$\bphi^*_{2t}$ of \eqref{primal} is the vector of moments up to degree $2t$ of the Chebyshev measure $dx/\pi\sqrt{1-x^2}$ on $[-1,1]$, and \eqref{eq:th1-1} is exactly \eqref{pell-sum-1}.
\end{remark}

\begin{lemma}
\label{reverse}
Let $g\in \R[\x]$ of even degree be fixed, $G:=\{g\}$,  and suppose that there are two polynomials of even degree
 $p\in\mathrm{int}(\Sigma_t)$, and $q\in\mathrm{int}(\Sigma_{t-t_g})$  such that $p+g\,q=1$.
Then there exists a linear functional $\phi\in\R[\x]_{2t}^*$ with $\bphi\in \mathrm{int}(Q_t(G)^*)$ such that
\begin{equation}
\label{reverse-1}
1\,=\,\v_t(\x)^T\Lambda^{\phi}_t(\x)^{-1}\,\v_t(\x)+ g(\x)\,\v_{t-t_g}(\x)^T\Lambda^{g\cdot\phi}_{t-t_g}(\x)^{-1}\,\v_t(\x)\,,
\quad\forall \x\in\R^n\,.
\end{equation} 
In particular with $g\in\R[\x]$ fixed: If there exist polynomials $(C_i,H_i)_{i\in I}\subset\Z[\x]$ that solve Pell's polynomial equation $C^2_i+g\, H_i^2=1$, $i\in I$, and if
$\sum_{i\in I} C_i^2\in\mathrm{int}(\Sigma[\x]_t)$, $\sum_{i\in I} H_i^2\in\mathrm{int}(\Sigma[\x]_{t-t_g})$, then \eqref{reverse-1} holds for some $\bphi\in\mathrm{int}(Q_t(G)^*)$.
\end{lemma}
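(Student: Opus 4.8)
The plan is to reduce the stated hypothesis to the condition $1\in\mathrm{int}(Q_t(G))$ of Theorem \ref{th1} and then read off \eqref{reverse-1} from the distinguished representation supplied by Nesterov's theorem \cite{nesterov}. The first observation is purely structural: since $g_0=1$ and here $G=\{g_0,g\}$, the $2t$-truncated quadratic module splits as the Minkowski sum $Q_t(G)=\Sigma_t+g\cdot\Sigma_{t-t_g}$ inside the finite-dimensional vector space $\R[\x]_{2t}$ (the degree bound $\mathrm{deg}(\sigma_g\,g)\le 2t$ is exactly $\sigma_g\in\Sigma_{t-t_g}$, since $\mathrm{deg}\,g=2t_g$).

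Next I would establish $1\in\mathrm{int}(Q_t(G))$. The hypothesis already provides $p\in\mathrm{int}(\Sigma_t)$, so $\mathrm{int}(\Sigma_t)\neq\emptyset$, and I would use the elementary topological fact that $\mathrm{int}(A)+B\subseteq\mathrm{int}(A+B)$ whenever $\mathrm{int}(A)\neq\emptyset$: if $U\subseteq A$ is an open neighborhood of $a\in\mathrm{int}(A)$ then, for any $b\in B$, the translate $U+b\subseteq A+B$ is an open neighborhood of $a+b$. Applying this with $A=\Sigma_t$ and $B=g\cdot\Sigma_{t-t_g}$, the decomposition $1=p+g\,q$ with $p\in\mathrm{int}(\Sigma_t)$ and $g\,q\in g\cdot\Sigma_{t-t_g}$ gives $1\in\mathrm{int}(Q_t(G))$. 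Note that only $p\in\mathrm{int}(\Sigma_t)$ is actually used here; the assumption $q\in\mathrm{int}(\Sigma_{t-t_g})$ is not needed for this implication and is present only for the ``in particular'' clause.

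With $1\in\mathrm{int}(Q_t(G))$ in hand, I would invoke Nesterov's representation theorem exactly as in the proof of Theorem \ref{th1}, but applied to the constant polynomial $1$ rather than to $\sum_{g\in G_t}s(t-t_g)$. This produces a unique $\bphi\in Q_t(G)^*$ with $\M_t(\bphi)\succ0$ and $\M_{t-t_g}(g\cdot\bphi)\succ0$, hence $\bphi\in\mathrm{int}(Q_t(G)^*)$ and both Christoffel functions well defined, such that $1=\Lambda^{\phi}_t(\x)^{-1}+g(\x)\,\Lambda^{g\cdot\phi}_{t-t_g}(\x)^{-1}$ for all $\x\in\R^n$; recalling $\Lambda^{\phi}_t(\x)^{-1}=\v_t(\x)^T\M_t(\bphi)^{-1}\v_t(\x)$, this is precisely \eqref{reverse-1}.

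For the ``in particular'' clause I would sum the $\#I$ Pell identities $C_i^2+g\,H_i^2=1$ over $i\in I$ and divide by $\#I$, obtaining $p+g\,q=1$ with $p:=\frac{1}{\#I}\sum_{i\in I}C_i^2$ and $q:=\frac{1}{\#I}\sum_{i\in I}H_i^2$. Since scaling by a positive constant preserves cone interiors, the assumptions $\sum_{i\in I}C_i^2\in\mathrm{int}(\Sigma_t)$ and $\sum_{i\in I}H_i^2\in\mathrm{int}(\Sigma_{t-t_g})$ give $p\in\mathrm{int}(\Sigma_t)$ and $q\in\mathrm{int}(\Sigma_{t-t_g})$, so the first part applies verbatim. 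The only point demanding care is the interior bookkeeping — the Minkowski-sum interior inclusion and the verification that Nesterov's theorem assigns an interior functional $\bphi\in\mathrm{int}(Q_t(G)^*)$ to the interior point $1$ (so that the matrix inverses, hence the Christoffel functions, genuinely exist) — while the remaining algebra is routine.
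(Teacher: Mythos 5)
Your proof is correct and follows essentially the same route as the paper's: reduce the hypothesis to $1\in\mathrm{int}(Q_t(G))$ and then invoke the Nesterov-type representation theorem for the constant polynomial $1$ (the paper cites \cite[Lemma 4]{cras-1}, which is precisely the polynomial rephrasing of Nesterov's Theorem 17.7 that you apply via the machinery of Theorem \ref{th1}). You additionally make explicit two points the paper glosses over, both to your credit: the Minkowski-sum argument $\mathrm{int}(A)+B\subseteq\mathrm{int}(A+B)$ justifying $1=p+g\,q\in\mathrm{int}(Q_t(G))$ (which indeed needs only $p\in\mathrm{int}(\Sigma_t)$), and the $1/\#I$ normalization in the ``in particular'' clause, where the paper's literal choice $p=\sum_{i\in I}C_i^2$, $q=\sum_{i\in I}H_i^2$ yields $p+g\,q=\#I$ rather than $1$ and thus requires exactly the rescaling you perform.
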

\begin{proof}
 Let $G:=\{g\}$ and let $Q_t(G)$ be as in \eqref{quad-module-t}.
 As $p\in\mathrm{int}(\Sigma_t)$, and $q\in\mathrm{int}(\Sigma_{t-t_g})$, 
 $1=p+g\,q\in \mathrm{int}(Q_t(G))$ and by \cite[Lemma 4]{cras-1},  \eqref{reverse-1} holds. The second statement is a direct consequence by taking $p=\sum_{i\in I} C_i^2$ and $q=\sum_{i\in I} H_i^2$.
\end{proof}
So Lemma \ref{reverse} states that if the triple $(p,\,g,\,q)$ solve the generalized Pell's equation
$p+g\,q=1$, with $p\in\mathrm{int}(\Sigma_t)$ and $q\in\mathrm{int}(\Sigma_{t-t_g})$, then
$p$ (resp. $q$) is the Christoffel polynomial $(\Lambda^{\phi}_t)^{-1}$ (resp. 
$(\Lambda^{g\cdot\phi}_{t-t_g})^{-1}$) associated with some
linear functional $\phi\in\R[\x]^*_{2t}$ such that $\bphi\in\mathrm{int}(Q_t(G)^*)$.

\subsection{An asymptotic result}
\label{sec:asymptotic}
We now consider asymptotics for the sequence $(\bphi^*_{2t})_{t\in\N}$ obtained in Theorem \ref{th1}, as $t$ grows.

\begin{thm}
\label{th2}
Under Assumption \ref{ass-1}, let $\bphi^*_{2t}$ be an optimal solution of  \eqref{primal}, $t\in\N$, guaranteed to exist by Theorem \ref{th1}. Then:

(i) The sequence $(\bphi^*_{2t})_{t\in\N}$ has accumulation points, and for each converging subsequence $(t_k)_{k\in\N}$, $(\bphi^*_{2t_k})_{k\in\N}$ converges pointwise to the vector
$\bphi=(\bphi_{\balpha})_{\balpha\in\N^n}$ of moments of some probability measure $\phi$ on $S$, that is,
\begin{equation}
\label{th2-1}
\lim_{k\to\infty}(\bphi^*_{2{t_k}})_{\balpha}\,=\,\bphi_{\balpha}\,=\,\phi(\x^{\balpha})\,=\,\int_S \x^{\balpha}\,d\phi\,,\quad\forall \balpha\in\N^n\,.\end{equation}

(ii) A limit probability measure $\phi$ as in (i) satisfies \eqref{th0-1} if and only if the whole sequence
$(\bphi^*_{2t})_{t\in\N}$ converges to $\phi$ and finite convergence takes place.
That is, there exists $t_0$ such that for all $t\geq t_0$,
\begin{equation}
\label{th2-11}
 (\bphi^*_{2t})_{\balpha}\,=\,\bphi_{\balpha}\,=\,\phi(\x^{\balpha})\,=\,\int_S \x^{\balpha}\,d\phi\,,\quad\forall \balpha\in\N^n_{2t}\,,\end{equation}
 and so $\phi$  is a representing measure of $\bphi^*_{2t}$ for all $t\geq t_0$.
 In addition, under the condition of Theorem \ref{th0}, $\phi$ is the equilibrium measure $\lambda_S$ of $S$.
\end{thm}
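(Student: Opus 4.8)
The plan is to establish boundedness of the moment sequences first, then extract a convergent subsequence, and finally invoke a moment-problem result to identify the limit as the moment vector of a measure on $S$. First I would fix $\balpha\in\N^n$ and bound $(\bphi^*_{2t})_{\balpha}$ uniformly in $t$. Since each $\bphi^*_{2t}$ is feasible for \eqref{primal}, we have $\phi^*_{2t}(1)=1$ and $\M_{t-t_g}(g\cdot\bphi^*_{2t})\succeq0$ for all $g\in G_t$; in particular $\M_t(\bphi^*_{2t})\succeq0$. Under Assumption \ref{ass-1}, the polynomial $\theta(\x)=R-\Vert\x\Vert^2$ lies in $Q_1(G)$, so the localizing constraints force the moments to obey the same a priori bounds as moments of a measure supported in the ball of radius $\sqrt{R}$. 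Concretely, positivity of the moment and localizing matrices, combined with $\theta\in Q_1(G)$, yields $\vert(\bphi^*_{2t})_{\balpha}\vert\leq R^{\vert\balpha\vert/2}$ (the standard bound from Archimedeanity), uniformly in $t\geq\vert\balpha\vert$. Thus every coordinate sequence is bounded.

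Having uniform bounds, I would use a diagonal argument: enumerate $\N^n$ and pass to nested subsequences so that $(\bphi^*_{2t_k})_{\balpha}$ converges for every $\balpha$ to some limit $\bphi_{\balpha}$. The resulting limit sequence $\bphi$ then satisfies $\M_t(\bphi)\succeq0$ and $\M_t(g\cdot\bphi)\succeq0$ for all $t$ and all $g\in G$, since positive semidefiniteness is preserved under entrywise limits. By Putinar's Positivstellensatz (as invoked in the proof of Theorem \ref{th0}), the Archimedean property of $Q(G)$ guarantees that $\bphi$ has a unique representing Borel measure $\phi$ supported on $S$, with $\phi(1)=\lim_k\phi^*_{2t_k}(1)=1$, so $\phi$ is a probability measure on $S$. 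This gives \eqref{th2-1} and proves (i). I expect no real obstacle here; the only care needed is to cite Assumption \ref{ass-1} correctly to get the uniform moment bounds.

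\textbf{Part (ii): equivalence of \eqref{th0-1} with whole-sequence finite convergence.}

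For the harder direction, suppose $\phi$ (a limit measure as in (i)) satisfies \eqref{th0-1}. Then the hypotheses of Theorem \ref{th0} apply to $\bphi$, so $\phi$ is the \emph{unique} representing measure of $\bphi$, and moreover the identity \eqref{th0-1} is an algebraic equation in the entries of $\bphi^*_{2t}$. The key observation is that \eqref{th0-1} is equivalent, via Theorem \ref{th1}, to saying that for each $t\geq t_0$ the truncated moment vector of $\phi$ is the unique optimizer $\bphi^*_{2t}$ of \eqref{primal}: indeed Theorem \ref{th1} characterizes the optimizer of \eqref{primal} as the unique $\bphi^*_{2t}\in Q_t(G)^*$ for which the certificate \eqref{eq:th1-1} holds, and \eqref{th0-1} is precisely that certificate with $\bphi^*_{2t}$ replaced by the degree-$2t$ moments of $\phi$. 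By uniqueness of the optimizer, the degree-$2t$ truncation of $\bphi$ must coincide with $\bphi^*_{2t}$ for every $t\geq t_0$, which is \eqref{th2-11}; so the convergence is finite and (since the limit along every subsequence is then forced to agree with $\phi$ up to any fixed degree) the whole sequence converges to $\bphi$. Conversely, if the whole sequence converges finitely, i.e.\ \eqref{th2-11} holds for all $t\geq t_0$, then $\phi$ is a representing measure of each $\bphi^*_{2t}$, and substituting the moments of $\phi$ into the certificate \eqref{eq:th1-1} of Theorem \ref{th1} yields exactly \eqref{th0-1}. The main obstacle is this equivalence between the certificate \eqref{eq:th1-1} and the Christoffel-function identity \eqref{th0-1}: one must check carefully that $\Lambda^{g\cdot\phi}_{t-t_g}$ computed from the truncated moments of $\phi$ agrees with $\Lambda^{g\cdot\bphi^*_{2t}}_{t-t_g}$, which follows because the Christoffel function of degree $t-t_g$ depends only on moments up to degree $2(t-t_g)\leq 2t$. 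Finally, the last sentence of (ii) is immediate: under the Bernstein-Markov hypothesis of Theorem \ref{th0}, once \eqref{th0-1} holds for $\phi$, Theorem \ref{th0} identifies $\phi=\lambda_S$.
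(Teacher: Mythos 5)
Your proposal is correct and follows essentially the same route as the paper: part (i) via the uniform moment bounds $\vert(\bphi^*_{2t})_{\balpha}\vert\leq R^{\vert\balpha\vert/2}$ from Assumption \ref{ass-1}, compactness extraction, and Putinar's Positivstellensatz for the Archimedean $Q(G)$; part (ii) via the uniqueness of the optimizer in Theorem \ref{th1} (Nesterov's representation) to force $\bphi_{2t}=\bphi^*_{2t}$ for $t\geq t_0$, with the converse by substituting the moments of $\phi$ into \eqref{eq:th1-1}, and Theorem \ref{th0} for the identification $\phi=\lambda_S$. Only a trivial bookkeeping slip: the localizing matrix $\M_{t-t_g}(g\cdot\bphi)$ involves moments up to degree $2(t-t_g)+\mathrm{deg}(g)\leq 2t$, not $2(t-t_g)$, but your conclusion that $\Lambda^{g\cdot\phi}_{t-t_g}$ depends only on moments up to degree $2t$ is unaffected.
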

\begin{proof}
 (i) As $R-\Vert\x\Vert^2\in Q_1(G)$, the set of feasible solutions of \eqref{primal} is compact.
 Indeed, let $\bphi_{2t}$ be feasible for \eqref{primal}. Then as $\phi_{2t}(1)=1$, $R\geq \phi_{2t}(x_i^2)$
 for all $i=1,\ldots,n$. Next in multiplying by $x_i$ (with $i$ arbitrary), 
 $R\,x_i^2-x_i^2\cdot\Vert\x\Vert^2\in Q_2(G)$ and so
 \[R^2\,\geq\,R\,\phi_{2t}(x_i^2)\,\geq\,\phi_{2t}(x_i^2\cdot\Vert\x\Vert^2)\,\geq\,\phi_{2t}(x_i^4)\,\Rightarrow\,R^2\geq\phi_{2t}(\x_i ^4)\,.\]
 Iterating yields $R^t\geq\phi_{2t}(\x_i^{2t})$ for every $i=1,\ldots,n$. Then by \cite[Proposition 2.38, p. 41]{CUP} one obtains $\vert (\phi_{2t})_{\balpha}\vert\leq \max[1,R^t]$ for all $\balpha\in\N^n_{2t}$, 
 and all  $\bphi_{2t}\in Q_t(G)^*$. 
 In fact (and assuming $R\geq1$) we even have
 $\vert (\phi_{2t})_{\balpha}\vert\,\leq\,R^{\vert\balpha\vert/2}$, for all $\vert\balpha\vert\leq2t$,  and all  $\bphi_{2t}\in Q_t(G)^*$. 

 By completing with zeros, the finite sequence $\bphi^*_{2t}$ is viewed as an infinite sequence 
 indexed by  $\N^n$. Then by a standard argument involving scaling and the $\sigma(\ell_\infty,\ell_1)$ weak-$\star$ topology,  the sequence $(\bphi^*_{2t})_{t\in\N}$ has accumulation points
and for each subsequence $(t_k)_{k\in\N}$ converging 
to some $\bphi\in\N^n$, one obtains the pointwise convergence
$\lim_{k\to\infty}\,(\phi^*_{2t_k})_{\balpha}=\phi_{\balpha}$, for every $\balpha\in\N^n$.
Next, let $d\in\N$ and $g\in G$ be fixed, arbitrary. Observe that
$\M_d(\bphi^*_{2t_k})\succeq0$ as a principal submatrix of $\M_{t_k}(\bphi^*_{2t_k})\succeq0$, and similarly
$\M_d(g\cdot\bphi^*_{2t_k})\succeq0$ as a principal submatrix of $\M_{t_k}(g\cdot\bphi^*_{2t_k})\succeq0$ 
(when $k$ is sufficiently large so that $G_{t_k}=G$).
Therefore by the above pointwise convergence, $\M_d(g\cdot\bphi^*_{2t_k})\to \M_d(g\cdot\bphi)\succeq0$ as $k$ increases.   As $Q(G)$ is Archimedean, then by Putinar's Positivstellensatz \cite{putinar}, $\phi$ is a Borel probability measure on $S$ (as $\phi^*_{2t_k}(1)=1$ for all $k$).

 (ii) Let $\phi$ be as in (i) and suppose that $\phi$ satisfies \eqref{th0-1}. Then for each $t\geq t_0$,
 the vector $\bphi_{2t}=(\phi_{\balpha})_{\balpha\in\N^n_{2t}}$ is an optimal solution of \eqref{primal},
 and by uniqueness, $\bphi_{2t}=\bphi^*_{2t}$. That is, $\phi$ is a representing measure for $\bphi^*_{2t}$
 for all $t\geq t_0$. But this implies that $\bphi^*_{2(t+1)}$ is an extension of $\bphi^*_{2t}$ for all $t\geq t_0$,
 and therefore the whole sequence converges to $\bphi$, and the convergence is finite.
  
 Conversely, if finite convergence takes place, that is, if $\bphi^*_{2(t+1)}$ is an extension of $\bphi^*_{2t}$ for all $t\geq t_0$,  then $\bphi$ in (i) is the unique accumulation point
 and its associated measure $\phi$ satisfies \eqref{th0-1}.

Finally, if $(S,g\cdot\phi)$ satisfies the Bernstein-Markov property for all $g\in G$, then by Theorem \ref{th0},
 $\phi=\lambda_S$, which concludes the proof.
  \end{proof}

\begin{remark}
Theorem \ref{th2} provides a simple test to detect whether the set $G$ of generators of $S$ is a good one,
and if so, a numerical scheme to compute moments of the equilibrium measure $\lambda_S$ of $S$. 
Indeed if \eqref{th2-11} has to hold for the equilibrium measure $\lambda_S$, then necessarily, the 
unique optimal solution $\bphi^*_{2(t+1)}$ of \eqref{primal} for $t+1$ must be an extension of 
the unique optimal solution $\bphi^*_{2t}$ of \eqref{primal} for $t$, whenever $t$ is sufficiently large.
So for instance, if one observes that $\bphi^*_2$ is an extension of $\bphi^*_1$
after solving \eqref{primal} for $t=1$ and $t=2$, then it already provides a good indication that
finite convergence may indeed take place.
\end{remark}

\section{Examples on some particular sets $S$}
\label{eq:numerics}
We know that Theorem \ref{th2} holds for the equilibrium measure \\
$\lambda_S=1_{[-1,1]}(x)\frac{dx}{\pi\sqrt{1-x^2}}$ of the interval $S=[-1,1]$.
Next, we first show how \eqref{th2-11} holds at least for $t=1,2,3$ 
in the bivariate case with $S$ being the Euclidean unit box and unit ball, or the canonical simplex,
indeed $\bphi^*_{2}$, $\bphi^*_{4}$, and $\bphi^*_{6}$,
are moment vectors up to degree $2$ and $4$ and $6$,
of the equilibrium measures $dxdy/2\pi\sqrt{1-x^2-y^2}$, $dxdy/\pi^2\sqrt{(1-x^2)(1-y^2)}$, and 
$dxdy/\pi\sqrt{x\cdot y\cdot (1-x-y)}$, respectively. \\

\subsection*{On the Euclidean unit box} ~

Let $S:=[-1,1]^2$ and $\lambda_S=dxdy/\pi^2\sqrt{(1-x^2)(1-y^2)}$.
 With the univariate Chebyshev polynomials $T_n$ of first kind and  $U_n$ of second kind, and letting
 \[g_1(x,y)\,:=\,(1-x^2)\,;\quad g_2(x,y)\,:=\,(1-y^2)\,;\quad g_3(x,y)\,:=\,(1-x^2)(1-y^2)\,,\]
 with $G=\{g_1,g_2,g_3\}$:
 
 - $(P_{ij}(x,y):=\hat{T}_i(x)\hat{T}_j(y))_{i,j\in\N}$ form an orthonormal family with respect to $d\lambda_S$, 
 
 - $(P^{g_1}_{ij}(x,y):=\hat{U}_i(x)\hat{T}_j(y))_{i,j\in\N}$ form an orthonormal family with respect to $(1-x^2)d\lambda_S$, 
 
 -  $(P^{g_2}_{ij}(x,y):=\hat{T}_i(x)\hat{U}_j(y))_{i,j\in\N}$ form an orthonormal family with respect to $(1-y^2)d\lambda_S$,
 
 - $(P^{g_3}_{ij}(x,y):=\hat{U}_i(x)\hat{U}_j(y))_{i,j\in\N}$ form an orthonormal family with respect to $(1-x^2)(1-y^2)d\lambda_S$. 
 
 Then 
 \[P_{10}(x,y)^2+P_{01}^2(x,y)+(1-x^2)\,P^{g_1}_{00}(x,y)^2+(1-y^2)\,P^{g_2}_{00}(x,y)^2\,=\,2\,,\]
 from which we obtain:
 \[(\Lambda^{\lambda_S}_1)^{-1}+g_1\cdot(\Lambda^{g_1\cdot\lambda_S}_0)^{-1}+g_2\cdot(\Lambda^{g_2\cdot\lambda_S}_0)^{-1}\,=\,5\,=\,s(1)+2s(0)\,.\]
 Next,
 \begin{eqnarray*}
 P_{20}(x,y)^2+P_{11}(x,y)^2+P_{02}(x,y)^2&=&8x^4+8y^4-8x^2-8y^2+4x^2y^2+4\\
 (1-x^2)\,((P^{g_1}_{10})^2+(P^{g_1}_{10})^2)&=&8x^2-8x^4+4y^2-4x^2y^2\\
 (1-y^2)\,((P^{g_2}_{10})^2+(P^{g_1}_{10})^2)&=&8y^2-8y^4+4x^2-4x^2y^2\\
  (1-x^2)(1-y^2)\,(P^{g_3}_{00})^2&=&4-4x^2-4y^2+4x^2y^2\,.
  \end{eqnarray*}
 Again after scaling (to get borthonormal polynomials) and summing up, one obtains
 \[(\Lambda^{\lambda_S}_2)^{-1}+g_1\cdot(\Lambda^{g_1\cdot\lambda_S}_1)^{-1}+g_2\cdot(\Lambda^{g_2\cdot\lambda_S}_1)^{-1}
 +g_3\cdot(\Lambda^{g_3\cdot\lambda_S}_0)^{-1}\,=\,13\,=\,s(2)+2s(1)+s(0)\,.\]
\subsection*{On the $2D$-Euclidean ball}~
With $\lambda_S=\frac{dx\,dy}{2\pi\sqrt{1-x^2-y^2}}$, one obtains:
\[\M_2(\lambda_S)\,=\,
\left[\begin{array}{cccccc} 1 & 0 &0 &1/3 &0 &1/3\\
0 & 1/3 &0 &0 &0 &0\\
0 & 0 & 1/3 &0 &0 &0 \\
1/3& 0 &0 &1/5 &0 &1/15 \\
0& 0 &0 &0 &1/15 &0 \\
1/3& 0 &0 &1/15 &0 &1/5\end{array}\right]\,.\]
Similarly, with $g\cdot\lambda_S=(1-x^2-y^2)\,dx\,dy=\sqrt{1-x^2-y^2}\,dx\,dy/2\pi$,
\[\M_1(g\cdot\lambda_S)\,=\,
\left[\begin{array}{ccc} 
1/3 & 0 &0 \\
0 & 1/15 &0 \\
0 & 0 & 1/15
\end{array}\right]\,,\]
which yields
\[\Lambda^{\lambda_S}_1(x,y)^{-1}+(1-x^2-y^2)\cdot\Lambda^{g\cdot\lambda_S}_0(x,y)^{-1}\,=\,3+1\,=\,s(1)
+s(0)\,,\]
as well as
\[\Lambda^{\lambda_S}_2(x,y)^{-1}+(1-x^2-y^2)\cdot\Lambda^{g\cdot\lambda_S}_1(x,y)^{-1}\,=\,9\,=\,s(2)
+s(1)\,,\]
and similarly
\[\Lambda^{\lambda_S}_3(x,y)^{-1}+(1-x^2-y^2)\cdot\Lambda^{g\cdot\lambda_S}_2(x,y)^{-1}\,=\,16\,=\,s(3)+s(2)
+s(1)\,,\]
as indicated in Theorem \ref{th0}.\\

\subsection*{On the simplex}~
Consider the canonical simplex $S=\{(x,y)\in\R^2: x+y\,\leq\,1\,;\: x\,,y\,\geq0\,\}$
with equilibrium measure
\[\lambda_S\,=\,\frac{1_{S}(x,y)dx\,dy}{2\pi\sqrt{x}\,\sqrt{y}\,\sqrt{1-x-y}}\,.\]
There are several ways to represent $S$ and in particular, consider $G=\{g_1,g_2,g_3\}$ with
\[(x,y)\mapsto g_1(x,y)\,:=\,x\cdot (1-x-y)\,;\quad
g_2(x,y)\,:=\,y\cdot (1-x-y)\,,\]
and $(x,y)\mapsto g_3(x,y):=x\cdot y$. Do we have
\[\Lambda^{\lambda_S}_t(x)^{-1}+g_1\cdot\Lambda^{g_1\cdot\lambda_S}_{t-1}(x)^{-1}+g_2\cdot\Lambda^{g_2\cdot\lambda_S}_{t-1}(x)^{-1}+g_3\cdot\Lambda^{g_3\cdot\lambda_S}_{t-1}(x,y)^{-1}\,=\,1\,?\,\]
With $t=1$, the moment matrix of $\lambda_S$ reads:
\[\M_1(\lambda_S)\,=\,\frac{1}{15}\,\left[\begin{array}{ccc}
15 &5 & 5\\
5 &3&1\\
5&1&3
\end{array}\right]\,.\]
Next, we obtain 
\[\int_S x\cdot (1-x-y)\,d\lambda_S\,=\,\int_S y\cdot (1-x-y)\,d\lambda_S\,=\,\frac{1}{15}\,;\quad
\int_S xy\cdot (1-x-y)\,d\lambda_S\,=\,\frac{1}{15}\,.\]
Hence 
\begin{eqnarray*}
\Lambda^{\lambda_S}_1(x,y)^{-1}
&=&15\,(\frac{2}{5}-x-y+x^2+y^2+xy)\\
x\cdot (1-x-y)\,\Lambda^{g_1\cdot\lambda_S}_0(x,y)^{-1}
&=&15\,(x-x^2-xy)\\
y\cdot (1-x-y)\,\Lambda^{g_2\cdot\lambda_S}_0(x,y)^{-1}
&=&15\,(y-y^2-xy)\\
x\cdot y\,\Lambda^{g_3\cdot\lambda_S}_0(x,y)^{-1}&=&15\,xy\,,
\end{eqnarray*}
and therefore
\[\Lambda^{\lambda_S}_1(x,y)^{-1}+x\cdot (1-x-y)\,\Lambda^{g_1\cdot\lambda_S}_0(x,y)^{-1}\]
\[+\,y\cdot (1-x-y)\,\Lambda^{g_2\cdot\lambda_S}_0(x,y)^{-1}+x\cdot y\,\Lambda^{g_3\cdot\lambda_S}_0(x,y)^{-1}=6\,=\,s(1)+3\,s(0)\,,\]
as indicated by Theorem \ref{th0}. 
Similarly,
 \[\M_2(\lambda_S)\,=\,\left[\begin{array}{cccccc}   
 1.0000   & 0.3333   & 0.3333 &   0.2000  &  0.0667 &   0.2000\\
    0.3333  &  0.2000  &  0.0667  &  0.1429  &  0.0286 &   0.0286\\
    0.3333   & 0.0667  &  0.2000  &  0.0286 &   0.0286  &  0.1429\\
    0.2000 &   0.1429  &  0.0286 &   0.1111 &   0.0159  &  0.0095\\
    0.0667  &  0.0286   & 0.0286 &   0.0159 &   0.0095  &  0.0159\\
    0.2000   & 0.0286 &   0.1429   & 0.0095  &  0.0159 &   0.1111\end{array}\right]\,,\]
    whereas
    \[\M_1(g_1\cdot\lambda_S)\,=\,
    \left[\begin{array}{ccc}
    0.0667 & 0.0286  &  0.0095\\
    0.0286  &  0.0159 &   0.0032\\
    0.0095  &  0.0032  &  0.0032\end{array}\right]\,;\quad
    \M_1(g_2\cdot\lambda_S)\,=\,
    \left[\begin{array}{ccc}
    0.0667 & 0.0095 &0.0286  \\
    0.0095  &  0.0032 &   0.0032\\
    0.0286  &  0.0032  &  0.0159\end{array}\right]\,,\]
    and
  \[\M_1(g_3\cdot\lambda_S)\,=\,
    \left[\begin{array}{ccc}  0.0667 &   0.0286 &   0.0286\\
    0.0286  &  0.0159  &  0.0095\\
    0.0286  &  0.0095   & 0.0159\end{array}\right]\,.\]
    Then this implies
    \[(\Lambda^{\lambda_S}_2)^{-1}+\sum_{i=1}^3
    g_i\cdot (\Lambda^{g_i\cdot\lambda_S}_1)^{-1}\,=\,15\,=\,s(2)+3\,s(1)\,.\]
  In continuing with $t=3$ we also obtain
    \[(\Lambda^{\lambda_S}_3)^{-1}+\sum_{i=1}^3
    g_i\cdot (\Lambda^{g_i\cdot\lambda_S}_2)^{-1}\,=\,28\,=\,s(3)+3\,s(2)\,.\]

\subsection*{Intersection of two ellipsoids}~
Here we consider $S\subset \R^2$, $G=\{g_0,g_1,g_2\}$, with
\[g_1(x,y)\,:=\,1-2\,x^2-3\,y^2\,;\quad g_2(x,y)\,:=\,1-3\,x^2-2\,y^2\,.\]
so that $S$ is the intersection of two ellipsoids. With $t=1,2,3$, the respective optimal solutions 
$\bphi^*_2,\bphi^*_4$ and $\bphi^*_6$ of \eqref{primal} 
are such that $\bphi^*_4$ seems to be an extension of $\bphi^*_2$, and $\bphi^*_6$ seems to be an extension of 
$\bphi^*_4$, up to some numerical imprecision due to the solver; indeed the norm
of the difference between $\bphi^*_4$ and the restriction of $\bphi^*_6$ is about $0.006$, and for instance
\[\bphi^*_2\,=\,(1,0,0,0.00999961,0,0.00999962)\,\quad \bphi^*_4\,=\,(1,0,0,0.0117564,0,0.01175, \ldots)\,.\]
\[\bphi^*_4\,=\,(1,0,0,0.0117564,0,0.01175, \ldots)\,;\quad \bphi^*_6\,=\,(1,0,0,0.011506,0,0.0111425, \ldots)\,.\]
However, as the solver is not very accurate, it is difficult to conclude whether or not the differences between $\bphi_2,\bphi_4$ and $\bphi_6$ are due to numerical inaccuracies.

\subsection*{The TV-screen}~
Let $S:=\{\,(x,y)\in\R^2: x^4+y^4\leq1\,\}$. 
By solving numerically \eqref{primal}-\eqref{dual} with $t=2$ and $t=3$, we find
\[\Lambda^{\bphi^*_4}_2(x,y))^{-1}+(1-x^4-y^4)\,\Lambda^{g\cdot\bphi^*_4}_0(x,y)^{-1}\,=\,6+1\,=\,s(2)+s(0)\,\]
\[\Lambda^{\bphi^*_6}_3(x,y)^{-1}+(1-x^4-y^4)\,\Lambda^{g\cdot\bphi^*_6}_1(x,y)^{-1}\,=\,10+3\,=\,s(3)+s(1)\,\]
respectively, as predicted by Theorem \ref{th1}. However, 
we observed that $\bphi^*_6$ is not an extension of $\bphi^*_4$.

\subsection*{The Gaussian case}~
Finally, in the same spirit but not in the preceding context of a compact set $S\subset\R^n$, we 
consider the case of $\R^n$ where by \cite[Lemma 3]{cras-1} any polynomial $p\in\mathrm{int}(\Sigma[\x]_t)$
is the Christoffel function of some linear functional $\phi_p\in\Sigma[\x]^*_t$. 
Let $\Sigma\succ0$ be a real symmetric $n\times n$ matrix, and associated with $\Sigma$,  let $p$ be
the quadratic polynomial
\[\x\mapsto p(\x)\,:=\,1+\langle \x,\Sigma^{-1}\,\x\rangle\,,\quad\x\in\R^n\,,\]
which is in the interior of the convex cone $\Sigma[\x]_1$. Therefore, by \cite{nesterov} and \cite[Lemma 3]{cras-1}
\[p(\x)\,=\,
\v_1(\x)^T\M_1(\bphi)^{-1}\v_1(\x)\,,\quad \forall \x\in\R^n\,,\]
for some unique $\bphi\in\Sigma[\x]_1^*$. It is straightforward to check that
\[p(\x)\,=\,\v_1(\x)^T\left[\begin{array}{cc}1&0\\0&\Sigma\end{array}\right]^{-1}\v_1(\x)\,,\quad\forall \x\in\R^n\,,\]
and $\left[\begin{array}{cc}1&0\\0&\Sigma\end{array}\right]^{-1}$ is the  moment matrix 
of the Gaussian measure 
\[d\phi_p=(\mathrm{det}(2\pi\,\Sigma))^{-1/2}\exp(-\x^T\Sigma^{-1}\x/2)d\x\,.\]
That is, $\bphi$ is represented by the Gaussian measure $\phi_p$ and 
$\Lambda^{\phi}_1$ is the Christoffel  function of degree $2$ of the Gaussian measure $\phi_p$.
\[\Lambda^{\phi_p}_1(\x)^{-1}\,=\,(1,\x)^T\left[\begin{array}{cc}1&0\\0&\Sigma\end{array}\right]^{-1}(1,\x)\,=\,1+\x^T\Sigma^{-1}\x\,=\,p(\x).\]

\subsection{Discussion}

There are several issues that are worth investigating. The first one is to completely
validate our result for $t>3$, for the cases where $S$ is the unit box, the Euclidean unit ball, and the simplex.
One possibility is to use Corollary \ref{ortho-poly} for each degree $t$, which only requires
to show \eqref{eq:ortho-poly} (a property of orthonormal polynomials associated
with the measures $(g\cdot\lambda_S)_{g\in G}$) as we did on some of the above examples.

Another issue is to investigate what is a distinguishing feature of the limit measure $\phi$ in Theorem \ref{th2} when 
$\phi$ does not satisfy the generalized Pell's equation \eqref{contrib-1}. Could $\phi$ still be the equilibrium measure of $S$?

Finally, one would like to extend the present framework and characterize the linear functional $\phi_p\in
Q_t(G)^*$ in \cite[Lemma 4]{cras-1}, associated with a non constant polynomial $p\in\mathrm{int}(Q_t(G))$.
Then with arguments that mimic those used for the constant polynomial $p=1$, if 
$p\in\mathrm{int}(Q_t(G))$ for every $t\in\N$, then a natural candidate 
seems to be $\phi_p:=\lambda_S/p$, that is, $\phi_p$ is the measure with density $1/p$ with respect to the equilibrium measure $\lambda_S$ of $S$.

\section{Appendix}
\label{appendix}
\begin{lemma}
\label{lem-logdet}
Let $\mathcal{S}^n$ be the space of real symmetric $n\times n$ matrices and let $\mathcal{S}^n_{++}\subset\mathcal{S}^n$ 
be the convex cone of real $n\times n$ positive definite matrices $\Q$ (denoted $\Q\succ0$). Then
\begin{equation}
\label{fenchel}
n+\log\mathrm{det}(\M)+\log\mathrm{det}(\Q)\,\leq\,\langle\M,\Q\rangle\,,\quad\forall 
\M\,,\Q\,,\in\,\mathcal{S}^n\,.
\end{equation}
with equality if and only if $\Q=\M^{-1}$.
\end{lemma}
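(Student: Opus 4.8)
**The plan is to prove the Fenchel-type inequality \eqref{fenchel} by reducing to a scalar inequality via simultaneous diagonalization.** The statement is a classical log-det inequality, and the cleanest route is to observe that the quantity $\langle \M,\Q\rangle - \log\mathrm{det}(\M) - \log\mathrm{det}(\Q)$ is, up to the additive constant $n$, the Fenchel conjugate relationship for the convex function $\Q\mapsto -\log\mathrm{det}(\Q)$ on $\mathcal{S}^n_{++}$. I would first restrict attention to $\M,\Q\in\mathcal{S}^n_{++}$, since the statement as phrased quantifies over all of $\mathcal{S}^n$ but $\log\mathrm{det}$ is only finite (real) on the positive definite cone; the inequality is understood on the domain where both log-determinants are defined.

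\textbf{First step: reduce to eigenvalues.} Since $\Q\succ0$, write $\Q=\Q^{1/2}\Q^{1/2}$ and set $\M':=\Q^{1/2}\M\Q^{1/2}$, which is symmetric positive definite and similar to $\M\Q$. Then $\langle\M,\Q\rangle=\mathrm{trace}(\M\Q)=\mathrm{trace}(\M')$, while $\log\mathrm{det}(\M)+\log\mathrm{det}(\Q)=\log\mathrm{det}(\M')$ by multiplicativity of the determinant. Letting $\mu_1,\ldots,\mu_n>0$ denote the eigenvalues of $\M'$, the claimed inequality \eqref{fenchel} becomes
\[
n+\sum_{i=1}^n\log\mu_i\,\leq\,\sum_{i=1}^n\mu_i\,,
\]
i.e. the sum over $i$ of the elementary scalar inequalities $1+\log\mu_i\leq\mu_i$.

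\textbf{Second step: the scalar inequality and equality analysis.} The scalar fact $\log\mu\leq\mu-1$ for all $\mu>0$ is standard (the function $\mu\mapsto\mu-1-\log\mu$ is convex, vanishes at $\mu=1$ where its derivative vanishes, and is strictly positive elsewhere). Summing over the eigenvalues gives \eqref{fenchel}. For the equality characterization, equality forces $\mu_i=1$ for every $i$, so $\M'=\Q^{1/2}\M\Q^{1/2}$ has all eigenvalues equal to $1$; being symmetric, it must equal the identity, which yields $\M=\Q^{-1}$ after undoing the congruence. Conversely, if $\Q=\M^{-1}$ then $\M'=I$ and all eigenvalues are $1$, giving equality.

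\textbf{The main obstacle} is essentially bookkeeping rather than substance: one must be careful that the congruence $\M\mapsto\Q^{1/2}\M\Q^{1/2}$ preserves both the trace-inner-product and the sum of log-determinants simultaneously, and that the eigenvalues $\mu_i$ of $\M'$ are genuinely positive (which holds precisely because $\M\succ0$, so that $\M'$ is congruent to a positive definite matrix and hence itself positive definite). If one wished to interpret the inequality on all of $\mathcal{S}^n$ as literally written, the remaining care is the convention that $\log\mathrm{det}$ is $-\infty$ (or undefined) off the positive definite cone; I would simply state that the inequality is meaningful only where both determinants are positive, which is the only case used in the proof of Theorem~\ref{th1}.
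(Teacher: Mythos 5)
Your proof is correct, but it takes a genuinely different route from the paper's. The paper proves \eqref{fenchel} abstractly: it introduces the concave function $f(\Q)=\log\mathrm{det}(\Q)$ on $\mathcal{S}^n_{++}$ (with value $-\infty$ elsewhere), asserts that its concave Legendre--Fenchel conjugate is $f^*(\M)=n+\log\mathrm{det}(\M)$ on $\mathcal{S}^n_{++}$ (and $-\infty$ otherwise), and reads \eqref{fenchel} off the Fenchel inequality $f^*(\M)+f(\Q)\leq\langle\M,\Q\rangle$. You instead argue by congruence: $\M':=\Q^{1/2}\M\Q^{1/2}$ simultaneously preserves the trace pairing (by cyclicity) and the sum of the two log-determinants (by multiplicativity), reducing \eqref{fenchel} to the scalar bounds $1+\log\mu_i\leq\mu_i$ summed over the positive eigenvalues $\mu_i$ of $\M'$. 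Your route is more elementary and self-contained: the paper's ``it turns out that'' step computing $f^*$ is, in effect, exactly your eigenvalue computation, so you have inlined the one nontrivial fact the paper leaves unproved. Moreover you actually derive the equality characterization --- equality forces $\mu_i=1$ for all $i$, hence $\M'=\I$ and $\Q=\M^{-1}$ --- whereas the paper states ``with equality if and only if $\Q=\M^{-1}$'' but its Fenchel-inequality argument never establishes it (one would need to add that the infimum defining $f^*(\M)$ is uniquely attained at $\Q=\M^{-1}$, by strict concavity of $f$). What the paper's route buys in exchange is that the conjugate-function formulation matches the convex-duality framing in which the lemma is used (the weak-duality step of Theorem \ref{th1}), and the $-\infty$ convention handles the quantification over all of $\mathcal{S}^n$ automatically.

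One small imprecision worth fixing: you justify restricting to $\M,\Q\succ0$ by saying $\log\mathrm{det}$ ``is only finite on the positive definite cone,'' and later that the inequality is ``meaningful only where both determinants are positive.'' Positivity of the determinant is strictly weaker than positive definiteness: for $n$ even, $\M=-\I$ has $\mathrm{det}(\M)=1$, and with $\Q=\I$ the literal inequality \eqref{fenchel} fails, since the left-hand side is $n$ while $\langle\M,\Q\rangle=-n$. So the restriction to $\mathcal{S}^n_{++}$ is not a mere matter of where $\log\mathrm{det}$ is defined; the correct reading --- the one the paper's proof encodes by setting $f=-\infty$ off $\mathcal{S}^n_{++}$ --- is that $\log\mathrm{det}$ is assigned the value $-\infty$ on all of $\mathcal{S}^n\setminus\mathcal{S}^n_{++}$, including matrices with positive determinant that are not positive definite. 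With that convention your restriction is exactly right and your proof covers all cases of the lemma.
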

\begin{proof}
  Consider the concave function
\[f:\: \mathcal{S}^n\to \R\cup\{-\infty\}\,\quad \Q\mapsto f(\Q)\,=\,\left\{\begin{array}{l}
\log\mathrm{det}(\Q)\:\mbox{ if $\Q\in\mathcal{S}^n_{++}$,}\\
-\infty\:\mbox{ otherwise,}\end{array}\right.\]
and let $f^*$ be its (concave analogue) of Legendre-Fenchel conjugate, i.e.,
\[\M\,\mapsto f^*(\M)\,:=\,\inf_{\Q\in\mathcal{S}^n}\langle \M,\Q\rangle-f(\Q)\,.\]
It turns out that
\[f^*(\M)\,=\,\left\{\begin{array}{l}n+\log\mathrm{det}(\M)\,(=\,n+f(\M))\mbox{ if $\M\,\in\,\mathcal{S}^n_{++}$ ,}\\
-\infty\mbox{ otherwise.}\end{array}\right.\]
Hence the concave analogue of Legendre-Fenchel inequality states that
\[f^*(\M)+f(\Q)\,\leq\,\langle \M,\Q\rangle\,,\quad\forall \M\,,\Q\,\in\,\mathcal{S}^n\,,\]
and yields  \eqref{fenchel}.
\end{proof}

\paragraph{Proof of Lemma \ref{cond-int}}.
\begin{proof}
 Recall that $\theta(\x)=1-\Vert\x\Vert^2$. By \cite[Lemma 3.4]{ctp}
 \[(R+1)^t=\underbrace{(1+\Vert\x\Vert^2)^t}_{\Delta}+\theta(\x)\,\underbrace{\sum_{j=0}^{t-1}(R+1)^j(1+\Vert\x\Vert^2)^{t-j-1}}_{\Gamma}\,.\]
 Note that 
 \begin{eqnarray*}
 \Delta(\x)&=&\sum_{\balpha\in\N^n_t}\Theta_{\balpha}\x^{2\balpha}\,=\,\v_t(\x)^T\G_0\v_t(\x)\,,\quad\forall \x\,,\\
  \Gamma(\x)&=&\sum_{\balpha\in\N^n_{t-1}}\Gamma_{\balpha}\x^{2\balpha}\,=\,\v_{t-1}(\x)^T\G_1\v_{t-1}(\x)\,,\quad\forall \x\,,\\
 \end{eqnarray*}
 where $\G_0,\G_1$ are diagonal positive definite matrices (i.e. $\G_0,\G_1\succ0$),  and so
 $(R+1)^t=\Delta+\theta\,\Gamma$. Next, let $\W$ be a  real symmetric matrix such that
 \[\v_t(\x)^T\W\,\v_t(\x)\,=\,\sum_{g\in G_t}g(\x)\,\v_{t-t_g}(\x)\I_{t-t_g}\v_{t-t_g}(\x)\,,\]
 where $\I_{t-t_g}$ is the $s(t-t_g)$-identity matrix.
 As $\G_0\succ0$ there exists $\delta>0$ such that $\G_0-\delta\,\W\succ0$. But then
 \begin{eqnarray*}
 (R+1)^t&=&\v_t(\x)^T(\G_0-\delta\,\W)\v_t(\x)+
 \delta\,\sum_{g\in G_t}g(\x)\,\v_{t-t_g}(\x)\I_{t-t_g}\v_{t-t_g}(\x)\\
 &&+\theta(\x)\,\v_{t-1}(\x)^T\G_1\v_{t-1}(\x)\,.\end{eqnarray*}
 Finally, as $\theta\in Q_1(G)$ and $\mathrm{deg}(g)\leq 2$ for all $g\in  G_1$, write
 \[\theta(\x)\,=\,\v_1(\x)^T\A_0\v_1(\x)+\sum_{g_0\neq g \in G_1}g(\x)\,a_g\,\]
 with $a_g\geq0$ and $\A_0\succeq0$,  to obtain
 \[(R+1)^t\,=\,\v_t(\x)^T(\G_0-\delta\,\W)\v_t(\x)+
 \delta\,\sum_{g\in G_t\setminus G_1}g(\x)\,\v_{t-t_g}(\x)\I_{t-t_g}\v_{t-t_g}(\x)\]
 \[+\sum_{g\in G_1;g\neq g_0}g(\x)\,[\delta\,\v_{t-1}(\x)\I_{t-1}\v_{t-1}(\x)+ a_g\,\v_{t-1}(\x)^T\G_1\v_{t-1}(\x)]\,,\]
 \[+\delta\,\v_{t}(\x)\I_{t}\v_{t}(\x)+ 
 \underbrace{\v_1(\x)^T\A_0\v_1(\x)\,\v_{t-1}(\x)^T\G_1\v_{t-1}(\x)}_{\v_t(\x)^T\V\,\v_t(\x)\,;\:\V\succeq0}\,,\]
 and therefore $(R+1)^t\in \mathrm{int}(Q_t(G))$.
\end{proof}

\end{document}